\documentclass[12pt]{amsart}

\usepackage{geometry,enumitem}

\usepackage{graphicx}
\usepackage{color}
\usepackage{subcaption}

\usepackage{cite}
\usepackage{hyperref}

\numberwithin{equation}{section}
\newtheorem{thm}{Theorem}[section]
\newtheorem{theorem}[thm]{Theorem}

\newtheorem{proposition}[thm]{Proposition}
\newtheorem{lemma}[thm]{Lemma}
\newtheorem{remark}[thm]{Remark}

\newtheorem*{proposition*}{Proposition}
\newtheorem*{lemma*}{Lemma}
\newtheorem*{theorem*}{Theorem}
\newtheorem*{corollary*}{Corollary}
\newtheorem*{conjecture*}{Conjecture}

\newcommand{\nn}{\nonumber}
\newcommand{\R}{\mathbb{R}}

\newcommand{\mK}{\ensuremath{\mathcal{K}}}

\geometry{
 total={150mm,220mm},
 left=30mm,
 top=35mm,
 }

\title{Optimal Transport with Controlled Dynamics and Free End Times}

\author{Nassif Ghoussoub}
\author{Young-Heon Kim}
\author{Aaron Zeff Palmer}
\address{Department of Mathematics, University of British Columbia, Vancouver BC Canada V6T 1Z2}{
\email{ nassif@math.ubc.ca,  yhkim@math.ubc.ca, azp@math.ubc.ca}
\thanks{
The three
authors are partially supported by  the 
Natural Sciences and Engineering Research Council of Canada (NSERC)
}

\date{\today}

\begin{document}

\begin{abstract}
	We consider optimal transport problems where the cost is optimized over controlled dynamics and the end time is free. Unlike the classical setting, the search for optimal transport plans also requires the identification of optimal ``stopping plans," and the corresponding Monge-Kantorovich duality involves the resolution of a Hamilton-Jacobi-Bellman quasi-variational inequality.  We discuss both Lagrangian and Eulerian formulations of the problem, and its natural connection to Pontryagin's maximum principle. We also exhibit a purely dynamic situation, where the optimal stopping plan is a hitting time of a barrier given by the free boundary problem associated to the dual variational inequality. This problem was motivated by its stochastic counterpart, which will be studied in a companion paper.  	
\end{abstract}

\maketitle
 
\tableofcontents

%\marginpar{We may want to reduce the length of the introduction...YH.}
\section{Introduction}\label{sec:intro}	
	With a purpose of connecting mass transport with Mather theory, Bernard and Buffoni \cite{B-B} considered  ``generating functions"  on a compact manifold $M$ as cost functionals. These are    
	%fixed end-points dynamic problems 
	of the following type:
	\begin{equation*}%\label{eqn:Bernard_Buffoni}
		c_T(x,y):=\inf\Big\{\int_0^TL\big(t, \gamma(t), {\dot \gamma}(t)\big)\, dt; \gamma\in C^2([0, T], M);  \gamma(0)=x, \gamma(T)=y\Big\},
	\end{equation*}
	where $[0, T]$ is a fixed time interval, and $L: \R^+\times TM \to \R$ is a given Lagrangian that is convex in the second variable of the tangent bundle $TM$. The corresponding optimal transport problem consists of minimizing the total cost of {\it ``transport plans"} between two given probability distributions $\mu$ and $\nu$ on $M$, 
	\begin{equation*}
		V_{c_T}(\mu, \nu):=	\inf\Big\{\int_{M\times M} c_T(x, y) \, d\pi; \pi\in \Pi(\mu,\nu)\Big\}.
		%=\sup\Big\{\int_{M}\phi_T(y)\, d\nu-\int_{M}\phi_0(x)\, d\mu;\,  \phi_T, \phi_0 \in \mK(c_T)\Big\}.
	\end{equation*}
	Here $\Pi(\mu,\nu)$ is the set of {\it transport plans from $\mu$ to $\nu$}, that is those probability measures $\pi$ on $M\times M$ whose first (resp., second) marginal is $\mu$ (resp., $\nu$).
	 
	The consideration of such cost functionals showcased the fact that the theory of optimal transport can be seen as a natural extension of certain aspects of classical mechanics, where one considers an ensemble of identical, non-interacting `particles', which are transported from a given source distribution $\mu$, to a desired target distribution $\nu$. 

	% Fathi and Figalli \cite{F-F} eventually dealt with the case where $M$ is a non-compact Finsler manifold, while Agrachev and Lee \cite{A-L} considered the case of sub-Riemannian manifolds. 
	Note that standard cost functionals of the form  $c(x, y)=g(|y-x|)$, where $g$ is convex, are particular cases of the dynamic formulation, since they correspond to Lagrangians of the form $L(t, x, v)=g(v)$. %g(Tv)/T$. 
	The original Monge problem dealt with the cost $c(x, y)=|y-x|$ (\cite{M}, \cite{S}, \cite{E-G}, \cite{V1}, \cite{V2}) and was constrained to those probabilities in ${\Pi}(\mu, \nu)$ that are supported by graphs of measurable maps from $X$ to $Y$ pushing $\mu$ onto $\nu$.  Brenier \cite{B1} considered the important quadratic case  $c(x,y)=|y-x|^2$. This was followed by a large number of results addressing costs of the form $g(|y-x|)$, where $g$ is either a convex or a concave function \cite{G-M}.
	
	In this paper, we develop mass transport theory in two new directions: For one, we shall formulate it as a natural extension of optimal control theory, the latter being an extension of the  classical calculus of variations to `nonsmooth' or `semi-discrete' problems that arise frequently in engineering \cite{P}.  Instead of optimizing value functionals when the Lagrangian is a function of velocity, the controlled value might only be optimized over a set of controls, $A(t)\in\mathbb{A}$, that prescribe the velocity. The `state' variable can then obey dynamics of the form
	\begin{align}% \label{eqn:dynamics}
		\dot{\gamma}(t)=f\big(\gamma(t),A(t)\big).\nn
	\end{align}
	These type of  problems with a fixed end time, that is, when all trajectories stop at a given time $T$, has been studied by Agrachev and Lee \cite{A-L}. We shall consider the case when the end time is free, that is, when not given a priori, and is motivated by Skorokhod type embedding problems that we study in \cite{GKPS} and the stochastic control counterpart that we address in \cite{GKP2}.	Examples of such control problems arise in engineering problems such as allocation of emergency resources \cite{Fi}.  \\
	The new elements of our approach include:
	\begin{itemize}
		\item The use of Kantorovich type duality to define an obstacle problem associated to a corresponding Hamilton-Jacobi equation.  The relevance of this approach will be even more apparent in our companion paper \cite{GKPS}, where it is used to construct Skorokhod embeddings as hitting times of a corresponding free boundary problem. This approach has  already been used in particular one-dimensional Skorokhod problems by Cox and collaborators \cite{cox2013root}, \cite{beiglboeck2017optimal}. 

		\item An equivalent Eulerian formulation to the optimal transportation problem, with which we can again employ PDE methods.

%		\item Developing conditions to imply the free boundary to the obstacle problem uniquely determines the optimal end time.

	\end{itemize}

	The transport associated to control problems with free end times consists of  considering cost functionals of the following type:
	\begin{align}%\label{eqn:MKCost0}
		c(x,y):=\inf_{\tau,A(\cdot)}\Big\{\int_0^{\tau} K\big(t,\gamma(t),A(t)\big)dt;\ \dot{\gamma}(t)=f\big(\gamma(t),A(t)\big),\ \gamma(0)=x,\ \gamma(\tau)=y\Big\},\nn
	\end{align}	
	and the corresponding optimal total cost $V_c(\mu,\nu)$.  To our knowledge (and surprise), this variant of optimal control theory and optimal transportation has not been studied in the mathematical literature, despite the elegant mathematics that it yields, which we present in this paper. 

	Our approach follows the, by now standard, pattern of  identifying the dual of our optimal transportation problem, and use the properties of its extremals to infer properties of the primal problem. Recall that under very general conditions on a cost functional $c$, the Monge-Kantorovich duality yields that	 
	\begin{equation*}
		V_c(\mu, \nu)%:=	\inf\Big\{\int_{M\times M} c_T(x, y) \, d\pi; \pi\in \Pi(\mu,\nu)\Big\}.
		=D_c(\mu,\nu):=\sup\Big\{\int_{M}\psi(y)\, d\nu-\int_{M}\phi(x)\, d\mu;\,  \psi, \phi \in \mK(c)\Big\},
	\end{equation*}
	where $\mK(c)$ is the set of functions $\psi\in L^1(M, \nu)$ and $\phi\in L^1(M, \mu)$ such that 
	$$
		\psi(y)-\phi(x) \leq c(x,y) \quad \hbox{ for  all $(x,y)\in M\times M.$}
	$$
	The pairs of functions in $\mK(c)$ can be assumed to satisfy 
	\[
		\psi(y)=\inf_{x\in M} c(x, y)+\phi(x) \quad {\rm and} \quad\phi(x)=\sup_{y\in M} \psi(y)-c(x, y).
	\]
	These will be called {\it optimized Kantorovich potentials}, and for reasons that will become clear later, we shall say that $\phi$ (resp., $\psi$) is an initial (resp., final) Kantorovich potential.

	A crucial contribution by Bernard-Buffoni was to relate optimized Kantorovich potentials to solutions of corresponding  Hamilton-Jacobi equation, 
	\begin{align}\nonumber
			\frac{\partial}{\partial t}J+H(t, x, \nabla J)=&\ 0 \,\, {\rm on}\,\,  [0, T]\times M,
	\end{align}
	with either the forward or the  backward conditions, 
 	\begin{align}%\label{HJ+} 
		\hfill J(0, \cdot )=&\ \phi (\cdot ),\nn\\
		%\label{HJ-} 
		\hfill J(T, \cdot)=&\ \psi (\cdot) \,\, {\rm on}\,\, M, \nonumber
 	\end{align}
	where the Hamiltonian on $[0, T] \times T^*M$ is defined by
	$$
	H(t, q, p):=\sup_{v\in T_qM}\{\langle v, p\rangle -L(t, q, v)\}.
	$$
	This connection, coupled with the fact that the optimal transport map is often given by the Hamiltonian flow, clearly illustrates our claim above on how the theory of mass transport is a natural extension of the calculus of variations view of classical mechanics.
	The Eulerian formulation of mass transport, as initiated by Brenier-Benamou \cite{Be-Br} in the case of a quadratic cost, also leads to its identification as a version of the calculus of variation but on Wasserstein space of probability measures.  

	Following L. Pontryagin, we define the Hamiltonian in terms of the generalized momentum, or costate, $p$ as follows:
	\begin{align}\label{eqn:Hamiltonian-intro}
		H(t,q,p)=\sup_{A\in \mathbb{A}} \Big\{p\cdot f(q,A) -K(t,q,A)\Big\}.
	\end{align}
	Now, under certain natural hypothesis on $K$ and $f$, the cost $c$ becomes lower semi-continuous and Lipschitz in the second variable, and it is then standard (c.f. \cite{V1}) to conclude that the Kantorovich duality holds with attainment of both primal and dual problems. In our particular case, we show in Section~\ref{sec:Monge-Kantorovich}, the following duality
	\begin{align}\label{eqn:dual_problem-intro}
		V(\mu,\nu)=D(\mu, \nu):=\sup\Big\{ \int_{\R^n} \psi\ d\nu-\int_{\R^n} J_\psi(0,\cdot) d\mu ; \, \psi\in C(\R^n)\Big\},  
	\end{align}
	where  $J_\psi(t,\cdot)$ is the `value' or cost-to-go function, which represents maximum payoff when the terminal-payoff is capped at $\psi$, that is 
	\begin{equation}\label{eqn:value}
	 	J_\psi(t,q)=\sup_{\tau,A(\cdot)} \left\{ \psi\big(\gamma(\tau)\big) - \int_t^\tau K(s,\gamma(s),A(s)\big)ds;\ \dot{\gamma}(s)=f\big(\gamma(s),A(s)\big),\ \gamma(t)=q\right\}.%,
	\end{equation}
	The dynamic programming principle then equates $J_\psi$ with viscosity solutions of the Hamilton-Jacobi-Bellman quasi-variational inequality \cite{bardi2008optimal}, 
	\begin{align}\label{eqn:informalHJB}
		\frac{\partial }{\partial t}J+H\big(t,q,\nabla J\big)\leq&\ 0 \ {\rm on}\ \R^+\times \R^n\\
			\psi - J\leq&\ 0\ {\rm on}\ \R^+\times \R^n\nn,
	\end{align}
	where one of the two alternatives holds with equality --restated as (\ref{eqn:HJB}) below.   This is a key ingredient in our analysis. The function $J_\psi$ represents the  backward propagation of the cost such that, if differentiable, $-\nabla J_\psi(0,x)$ is the derivative of the cost of an optimal trajectory with respect to the starting position $x$.  
	While for each $(t,q)$, we have $J_\psi(t,q)\geq \psi(q)$, (\ref{eqn:informalHJB}) also implies that $J_\psi(0,\gamma(0))\leq \psi (\gamma(\tau))$ for any  maximizing trajectory of \eqref{eqn:value}.  These competing inequalities determine the end time $\tau$ to lie in the free boundary of the Hamilton-Jacobi inequality, i.e.\ the boundary of the set where $J_\psi = \psi$. 
		
	We also give an Eulerian formulation of the primal problem $V(\mu, \nu)$ as follows. We consider \emph{density processes} $\rho:\R^+\rightarrow \mathcal{M^+}(\R^{n}\times \mathbb{A})$ and  \emph{stopping distributions} $\tilde{\rho}\in \mathcal{M^+}(\R^+\times \R^n)$, where $\mathcal{M}^+$ represent the cone of non-negative Radon measures on the given metric space.  We then introduce ${\mathcal C}(\mu, \nu)$ to be the class of all {\it admissible process densities} $\rho$, starting at $\mu$, i.e., $d\mu (x) = \int_{\mathbb{A}} d\rho(0, x, \cdot ) $,  with a stopping distribution $\tilde \rho$, i.e.,  $d\nu (x) = \int_{\R^+} d\tilde{\rho} (\cdot, x)$, and satisfying (weakly) the following continuity equation: 
	\begin{align}\label{eqn:BB-continuity}
 		\tilde{\rho}+\frac{\partial }{\partial t}\left(\int_{\mathbb{A}}d\rho\right) + {\rm div}\Big(\int_{\mathbb{A}}f d\rho\Big)=0.
	\end{align}
	We then show  in Theorem~\ref{thm:Eulerian_equality} the following analogue of a celebrated result of Benamou-Brenier\cite{Be-Br}:
	\begin{align}\label{eqn:BB}
		V(\mu,\nu)=W(\mu,\nu):=\inf\Big\{\int_{\R^+}\int_{\mathbb{A}}   \int_{\R^{n}} K(t,q,A)\rho(t,dq,dA)\ dt; \, \rho \in {\mathcal C}(\mu, \nu) \Big\}.
	\end{align}
	We emphasize that unlike the classical case, where mass is conserved while moving in time and then stops at once at the given final time, our admissible process densities $\rho$ drop mass over time before ending with a stopping distribution $\tilde \rho$.  

	We next consider necessary optimality conditions for when the cost $c(x,y)$ of moving  state $x$ to $y$ is attained in a --not necessarily unique-- time $\tau^{x,y}$,  via a control policy $A^{x,y}$. Under suitable conditions, which %that
	ensure that the primal problem is attained at a probability measure $\pi$, and the dual problem is attained at some function $\psi$ with  $J_\psi$ as the corresponding solution of the Hamiliton-Jacobi inequality \eqref{eqn:informalHJB}, we show in Theorem~\ref{thm:Pontryagin} that 
	$$
		J_\psi(\tau^{x,y},y)=\psi(y) \quad \hbox{for $\pi$-a.e.\ $(x,y)$.}
	$$
	And just like the fixed-end case, but this time resorting to Pontryagin's maximum principle (Theorem~\ref{thm:Pontryagin}, item (2)), if an optimal path exists for $(x, y)$ on the support of $\pi$, then it is given by the projection $\gamma(t)$ of the Hamiltonian flow $(\gamma(t), p(t))$, where $\gamma (t)$ starts at $x$ and ends at $y$, while the costate $p(t)$ starts at  $\nabla J_\psi(0, x)$ and ends at $\nabla \psi(y)$. Moreover, 
	\begin{enumerate}
		\item	for a.e.\ $t<\tau^{x,y}$, the control is one where the supremum of the Hamiltonian 
 				$
					H\big(t,\gamma(t),p(t)\big)=p(t)\cdot f\big(\gamma(t),A(t)\big)-K(t,\gamma(t),A(t)\big)
				$ is attained,
		\item while at the end time $\tau=\tau^{x,y}$, the \emph{transversality} condition  $H\big(\tau,\gamma(\tau),p(\tau)\big)=0$ holds.
	\end{enumerate}
	Note that the existence of an optimal transport plan $\pi$ does not necessarily imply that for $\pi$-a.e. $(x,y)$, the cost $c(x,y)$ defined in \eqref{eqn:MKCost} is attained by some optimal pair $\left(\tau, A\right)$. Indeed, 
	without the concavity of $p \cdot f(q,A)-K(t,q,A)$  in \eqref{eqn:Hamiltonian-intro},  we cannot expect attainment of the optimal control policy by a map $t\mapsto A(t)$ for each initial position $x$ and end position $y$. However, one can go around this problem by relaxing the problem and considering 
	\emph{Young probability measures} $t\mapsto \alpha_t\in \mathcal{P}(\mathbb{A})$. But these are  already subsumed under the \emph{admissible process densities} introduced for the Eulerian formulation  \eqref{eqn:BB-continuity}.
	With a suitable coercivity condition for $t$ and $A$, we shall show attainment in the Eulerian formulation at an optimal pair $(\rho, \tilde{\rho})$ (see Theorem~\ref{thm:Eulerian_verification}). There, we also prove that the support of the optimal stopping distribution $\tilde{\rho}$ is contained in the set where $J_\psi=\psi$.
	
	It is important to note that the end time $\tau$ may not be determined uniquely for each given initial point $x$. In general, it is given by a distribution over $\mathbb{R}^+$, that we shall call  {\it stopping plan}, by analogy with the randomized stopping times arising in the theory of stochastic processes, and which will be even more apparant in our companion paper \cite{GKP2}. It is a natural question to see when such stopping plan, when optimized,  is actually a stopping time, i.e., given by a uniquely determined end time (still depending on the initial point $x$). In Section \ref{sec:hitting_times},
	we shall isolate conditions on the running cost (now considered as a Lagrangian) that allow for the determination of an optimal end time,  given by a stopping plan, to be the hitting time $\tau$ of a graph $(y,s({y}))$ of a function $s$ determined by an optimal dual function $\psi$ and its associated $J_\psi$.   As we show in  %Proposition~\ref{prop:dualStructure}.
	Theorem~\ref{thm:Monge_map}, this turned out to be satisfied under the conditions that $t\to K(t,q,A)$ is either strictly increasing or strictly decreasing in time. In either case,   the graph $(y, s(y))$ is, as expected, the free boundary of the variational inequality (\ref{eqn:informalHJB}). A similar approach to proving monotonicity of the solution $J_\psi$ to the variational inequality was taken by one of the authors and A.\ Vladimirsky for an optimal stopping problem with probabilistic constraints in a discrete setting \cite{P-V}.

	In Section~\ref{sec:examples}, we provide a few examples mostly focusing on non-smooth one dimensional cases, while in the final section, we will compare our setting with the classical optimal transport problem, see e.g. \cite{G-M}. 
	Indeed,  if we set the control set to be $\mathbb{A}=S^{n-1}\subset \R^n$ and $f(q,A)=A$, we can view the control problem as minimizing over trajectories with velocity bounded by 1. Consider now the cost is given only by a time penalty that is the derivative of a function $g$ with $g(0)=0$,
	$
		K(t,q,A)=g'(t).
	$
	The optimal trajectory connecting $x$ and $y$ is then the straight line that reaches $y$ at time $\tau^{x,y}=|y-x|$.  Thus the cost associated to travelling from $x$ to $y$ in the optimal amount of time is therefore 
	$
		c(x,y)=g(|y-x|).
	$
	The cases, considered above, when the running time increases (resp., decreases) correspond to when when $g$ is convex (resp., concave).

\section{The primal problem and its dual  \`a la Monge-Kantorovich} 
	\label{sec:Monge-Kantorovich}
	In this section, we prove the no duality gap equation \eqref{eqn:dual_problem-intro}, namely, that $V(\mu, \nu) = D(\mu, \nu),$ and that both optimization problems 
	are attained under suitable conditions on the running cost $K$,  the function $f$, and the source and target measures $\mu$ and $\nu$, such as the following.
	\begin{enumerate}[label=\textbf{H\arabic*}] \setcounter{enumi}{-1}
		\item \label{itm:continuity} 
			The control set $\mathbb{A}$ is a complete seperable metric space, and the running cost $K(t,q,A)$ is non-negative and continuous in $t$, $q$ and $A$.

		\item \label{itm:controllable} 
			The velocity function $f$ is Lipschitz with respect to $q$ and continuous with respect to $A$, and the span of $\big\{f(q,A);\ A\in \mathbb{A}\big\}$ is $\R^n$ for each $q$.

		\item \label{itm:compact} 
			The distributions $\mu$ and $\nu$ have compact support in $\R^n$.
	\end{enumerate}

	\begin{remark}
		These assumptions can be weakened,  but doing so incurs technical difficulties and the weakening of some results. In particular, $f$ could depend also on time for our results through Section {\normalfont\ref{sec:optimality_conditions}}, but we require that $f$ is independent of time for the monotonicity properties of Section {\normalfont\ref{sec:hitting_times}}.  In many models {\normalfont \ref{itm:controllable}} will not be satisfied, in which case the feasibility of the primal problem (i.e.\ $V(\mu,\nu)<\infty$) may be difficult to determine, and it is unclear in general if the dual attainment of Section {\normalfont\ref{sec:Eulerian}} holds.
		The assumption {\normalfont \ref{itm:compact}} is mainly for simplicity, and one may relax it to certain decay conditions of $\mu$ and $\nu$ as $x\to \infty$. 
	\end{remark}

	Our primal optimal transport problem is to minimize the total transportation cost of a transportation plan, $\pi \in \Pi(\mu,\nu)$, with its marginals prescribed as the initial and target distribution:
	\begin{align}\label{eqn:transport_problem}
		V(\mu,\nu) := \inf_{\pi\in \Pi(\mu,\nu)}\Big\{ \int_{\R^n}\int_{\R^n}c(x,y)\pi(dx,dy)\Big\}.
	\end{align}
	
	\begin{remark} We shall use  the following definition of the transportation cost:
		\begin{align}\label{eqn:MKCost}
			c(x,y)=\inf_{\tau,A(\cdot)}\Big\{\int_0^{\tau} K(t,\gamma(t),A(t)\big)dt;\ \gamma(t)= \int_0^tf\big(\gamma(s),A(s)\big)ds+x,\ \gamma(\tau)=y\Big\}, 
		\end{align}	
		which is slightly different from the strong form of the dynamics mentioned in the introduction. 
		This allows for non-differentiable paths, in particular if $A(\cdot)$ is not continuous, such as when it is alternating between two discrete values $\pm 1$. 
	\end{remark}

	\begin{lemma}\label{lem:lsc}
		Given hypotheses {\normalfont \ref{itm:continuity}} and {\normalfont \ref{itm:controllable}}, the optimal transportation cost $c$ of {\normalfont (\ref{eqn:MKCost})} is finite-valued and   continuous. Moreover $y \mapsto c(\cdot,y)$ is Lipschitz continuous.  
	\end{lemma}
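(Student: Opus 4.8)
The plan is to prove the three assertions by explicit construction and comparison of admissible trajectories, the enabling tool being a quantitative local form of the controllability built into \ref{itm:controllable}.

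\textbf{Step 1: local controllability and finiteness.} Fix $q_0\in\R^n$ and, using that $\{f(q_0,A):A\in\mathbb A\}$ spans $\R^n$, select finitely many controls $A_1,\dots,A_m\in\mathbb A$ whose velocities positively span $\R^n$ (so that $0$ is interior to their convex hull); by continuity of $f$ in $q$ this persists, with uniform constants, on a ball $B(q_0,r_{q_0})$. Concatenating short constant-control arcs chosen among the $A_i$ then joins any two points of $B(q_0,r_{q_0}/2)$ by an admissible trajectory whose duration is comparable to their distance and which stays in $B(q_0,r_{q_0})$. Covering a polygonal path from $x$ to $y$ by finitely many such balls (compactness of the path) and chaining the local maneuvers yields an admissible trajectory from $x$ to $y$ defined on a bounded interval $[0,T_0]$ and contained in a fixed compact tube $\mathcal N$; since $K$ is continuous it is bounded on $[0,T_0]\times\mathcal N\times\{A_1,\dots,A_m\}$, so \eqref{eqn:MKCost} evaluated on this trajectory is finite, i.e.\ $c(x,y)<\infty$.

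\textbf{Step 2: continuity and the Lipschitz bound.} I would obtain these together by two-sided concatenation, which requires no compactness of trajectories. Given $\varepsilon>0$ and $(x',y')$ close to $(x,y)$, pick an admissible $\gamma$ from $x$ to $y$ on $[0,\tau]$ with $\int_0^\tau K\,dt\le c(x,y)+\varepsilon$, prepend to it a short local maneuver steering $x'$ to $x$, and append a short local maneuver steering $y$ to $y'$, the two corrections having durations $O(|x-x'|)$ and $O(|y-y'|)$ and being furnished by Step 1. As $K$ is bounded on the compact space--time tubes swept by the corrections, they contribute $O(|x-x'|)$ and $O(|y-y'|)$ to the cost, whence $c(x',y')\le c(x,y)+C\,(|x-x'|+|y-y'|)+\varepsilon$; letting $\varepsilon\downarrow0$ and interchanging $(x,y)$ with $(x',y')$ gives $|c(x,y)-c(x',y')|\le C(|x-x'|+|y-y'|)$, hence joint continuity, and taking $x'=x$ the asserted Lipschitz dependence on $y$.

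\textbf{Where the difficulty lies.} The first delicate point is the extraction in Step 1 of quantitative local controllability from \ref{itm:controllable}: one must verify that the velocity family can be chosen to span $\R^n$ \emph{positively} --- so that short concatenations realize displacements in all directions, not merely in a cone --- and that the controllability constants are locally uniform; this is routine in the settings of interest (e.g.\ when $\mathbb A$ is symmetric, as in the examples) but uses more than the literal linear-span statement. The second, genuinely new, difficulty is the interaction of the time-dependence of $K$ with the \emph{free} end time: a near-optimal $\gamma$ from $x$ to $y$ need not have $\tau$ bounded, the prepended arc time-shifts the remainder of $\gamma$, and the appended correction sits at $t\approx\tau$, where $K(\tau,\cdot,\cdot)$ is a priori uncontrolled, while the Gr\"onwall bound on the deviation of $\gamma$ under a perturbation of its starting point degrades as $\tau$ grows. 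For continuity at a fixed $(x,y)$ this causes no trouble --- fix $\varepsilon$, hence $\tau$, first, and note the appended correction costs $O(|y-y'|)\cdot\max_i K(\tau,y,A_i)\to0$ --- but to produce a bona fide Lipschitz constant one must also bound the end times of near-optimal trajectories, which is immediate when $K$ is bounded in $t$ on compact $(q,A)$-sets and otherwise follows from a coercivity or monotonicity-in-$t$ condition on $K$ of the kind used later in the paper. Carrying out this bookkeeping carefully is the technical heart of the proof.
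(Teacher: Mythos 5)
Your proposal is, in substance, the paper's own argument: quantitative local controllability from \ref{itm:controllable} is used to produce a steering arc of duration $O(\mathrm{dist})$ between nearby points, finiteness and Lipschitz-in-$y$ follow by concatenating such an arc at the terminal end of a near-optimal path (exactly the paper's inequality \eqref{eqn:Lip-c}), and joint continuity follows by an additional estimate in the $x$-variable. The one genuine divergence is in how the $x$-variable is handled: you prepend a steering arc from $x'$ to $x$, which time-shifts the remaining trajectory and hence invokes the modulus of continuity of $K$ in $t$; the paper instead keeps the control fixed, starts the ODE from $x'$ rather than $x$, and uses Gr\"onwall/ODE stability to compare the two trajectories in space, invoking the modulus of continuity of $K$ in $q$. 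Both yield a modulus $\omega_Z(|x-x'|)$ rather than a Lipschitz bound in $x$ (and the lemma does not claim one); your initial display $|c(x,y)-c(x',y')|\le C(|x-x'|+|y-y'|)$ overstates this, but your ``difficulty'' paragraph correctly walks it back. Two further points you raise are well taken and are in fact glossed over by the paper's proof: (i) the literal statement of \ref{itm:controllable} gives only a linear span, whereas the construction needs the velocity set to positively span $\R^n$ (equivalently, $0$ in the interior of its convex hull) so that one can steer in every direction; and (ii) the appended correction sits at times near $\tau$, which is not bounded a priori under \ref{itm:continuity}--\ref{itm:controllable} alone, so the constant $\Lambda_Z$ implicitly requires either boundedness of $K$ in $t$ on compact $(q,A)$-sets or a time-coercivity hypothesis of the type \ref{itm:tCoercive} introduced later. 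These are real gaps in the paper's terse proof as written, and flagging them is appropriate.
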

	\begin{proof}
		For a compact set  $Z \subset \R^n$, \ref{itm:controllable} implies that for any $x$ and $y$ in $Z$ there is a controlled trajectory with $\gamma(0)=x$, $\gamma(\tau)=y$, and $\tau \leq C_1 |x-y|$.  Hypothesis \ref{itm:continuity} together with the compactness of $Z$, imply that the cost is bounded by $C_2\tau$.  
		Thus $c(x,y)\leq \Lambda_Z|x-y|<\infty$.  
		
		We next consider a second pair $x'$ and $y'$ in $Z$, and let $(\tau,A)$ be within $\epsilon$ of attaining the cost $c(x',y)$ and consider a controlled-trajectory connecting $y$ and $y'$ in time $\tau_1\leq C_1|y'-y|$.  Then, the concatenated path gives
		\begin{align}\label{eqn:Lip-c}
 			c(x',y')\leq c(x',y)+\epsilon +\Lambda_Z|y-y'|.
		\end{align}	
		In the limit as $x'\rightarrow x$ and $y'\rightarrow y$ the trajectories converge to a trajectory transporting $x$ to $y$ by continuity of the solution to an ODE with respect to the initial condition, thanks to assumption \ref{itm:controllable}.  Thus
		$$
			c(x',y')\leq c(x,y)+\epsilon +\Lambda_Z|y-y'|+\omega_Z(|x-x'|)
		$$ 
		where $\omega_Z(r)\rightarrow 0$ as $r\rightarrow 0$. Since $x'$, $y'$, $\epsilon$ and $Z$ are arbitrary this shows that $c$ is continuous.
		Moreover, notice that \eqref{eqn:Lip-c} also shows, by letting $\epsilon \to 0$, that $c$ is Lipschitz-continuous with respect to $y$.
	 \end{proof}
	
	Now, let us apply Kantorovich duality \cite{V1}, which consists of considering the maximization problem: 
	\begin{align}\label{eqn:MKDualCost}
		D_1(\mu, \nu):= \sup_{ \psi, \phi \in C(\R^n);  \psi\oplus (-\phi) \le c } % \psi(y)-J_0(x)\leq c(x,y) \forall x,y \in \R^n}
		\left\{ \int_{\R^n}\psi(y)\nu(dy)-\int_{\R^n}\phi(x)\mu(dx)\right\}.
	\end{align}
	The following result is standard. See for example, Theorem 2.9 in \cite{V1}. The Lipschitz regularity of $\psi$ is verified by the representation of $\psi$ as the infimum of Lipschitz functions, $\psi(y)=\inf_x\{c(x,y)+\phi(x)\}$.
	\begin{proposition}\label{prop:MKattainment}
		Assume the hypotheses {\normalfont \ref{itm:continuity}}, {\normalfont \ref{itm:controllable}}, and {\normalfont \ref{itm:compact}}. Then there is $\pi^*\in \Pi(\mu,\nu)$ that attains the optimal value $V(\mu,\nu)$ and there exist potentials $\psi^*$, $\phi^*$ that attain the value $D_1(\mu, \nu)$. Moreover, $\psi^*$ is Lipschitz continuous, and
		\begin{align} \nonumber
			&V(\mu, \nu) = D_1(\mu, \nu),\\ \label{eqn:pi-a.e.-equal-c}
 			&\psi^*(y)-\phi^*(x)=c(x,y)  \quad \hbox{for $\pi^*$-a.e. $(x, y)$.}
		\end{align}
	\end{proposition}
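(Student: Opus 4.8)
The plan is to obtain the whole statement from the classical Kantorovich duality theorem, for instance \cite[Theorem~2.9]{V1}, once its hypotheses are checked in our situation. By Lemma~\ref{lem:lsc} the cost $c$ is continuous, hence Borel and lower semicontinuous, and by \ref{itm:continuity} it is non-negative; by \ref{itm:compact} it is moreover bounded on the compact set $\mathrm{supp}\,\mu\times\mathrm{supp}\,\nu$. These are precisely the conditions under which Kantorovich duality holds without a gap, so the equality $V(\mu,\nu)=D_1(\mu,\nu)$ is immediate. For attainment of the primal problem I would argue directly: since $\mu,\nu$ are compactly supported, $\Pi(\mu,\nu)$ is tight and weak-$*$ sequentially compact by Prokhorov, while $\pi\mapsto\int c\,d\pi$ is weak-$*$ lower semicontinuous because $c$ is lower semicontinuous and bounded below; hence a minimizing sequence has a limit $\pi^*\in\Pi(\mu,\nu)$ with $\int c\,d\pi^*=V(\mu,\nu)$.

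For attainment of the dual problem I would reduce to \emph{optimized} pairs. Given any admissible $(\phi,\psi)$, replacing it by $\tilde\psi(y):=\inf_x\{c(x,y)+\phi(x)\}$ and then $\tilde\phi(x):=\sup_y\{\tilde\psi(y)-c(x,y)\}$ keeps admissibility and does not decrease the objective, so a maximizing sequence may be taken of this form. Working over $\mathrm{supp}\,\mu$ and $\mathrm{supp}\,\nu$ (and extending the potentials afterwards), the Lipschitz bound on $y\mapsto c(x,y)$ from Lemma~\ref{lem:lsc} makes each $\tilde\psi$ Lipschitz, being an infimum of functions with a common local Lipschitz constant; uniform continuity of $c$ on the compact set $\mathrm{supp}\,\mu\times\mathrm{supp}\,\nu$ makes the family $\{\tilde\phi\}$ equicontinuous; and subtracting a suitable constant (invisible to the objective since $\mu,\nu$ are probability measures) makes both families uniformly bounded. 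Arzela--Ascoli then furnishes a uniformly convergent subsequence whose limit $(\phi^*,\psi^*)$ is admissible and attains $D_1(\mu,\nu)$, with $\psi^*$ Lipschitz by the same infimum representation.

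It remains to verify \eqref{eqn:pi-a.e.-equal-c}. Using the marginal conditions on $\pi^*$, one has $\int\big(\psi^*(y)-\phi^*(x)\big)\,d\pi^*=\int\psi^*\,d\nu-\int\phi^*\,d\mu=D_1(\mu,\nu)=V(\mu,\nu)=\int c\,d\pi^*$; since the integrand $c(x,y)-\psi^*(y)+\phi^*(x)$ is non-negative everywhere and has zero $\pi^*$-integral, it vanishes $\pi^*$-a.e., which is the claimed identity.

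The only genuinely non-routine point is the dual attainment: one has to combine the $c$-conjugacy reduction, the regularity of $c$ supplied by Lemma~\ref{lem:lsc}, the normalization removing the additive constant, and Arzela--Ascoli, while being a little careful that ``Lipschitz'' is understood with the local constants coming from Lemma~\ref{lem:lsc}. Everything else is standard and is already packaged inside the Kantorovich duality theorem.
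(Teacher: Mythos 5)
Your proposal is correct and follows essentially the same route as the paper, which simply cites Villani's Theorem~2.9 for the duality and attainment and notes that $\psi^*$ is Lipschitz via the infimum representation $\psi(y)=\inf_x\{c(x,y)+\phi(x)\}$. You have merely unpacked the standard arguments (Prokhorov/lower semicontinuity for primal attainment, $c$-conjugation plus Arzel\`a--Ascoli for dual attainment, and the zero-integral argument for the $\pi^*$-a.e.\ identity) that the paper delegates to the reference.
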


	Now, we relate the dual potentials with the dynamic programming principle:
	Given $\psi$ upper semi-continuous, we define the `value' function  $J_\psi$ by 
	\begin{align} \label{eqn:dynamicProgramming}
		J_\psi(t,q)=\sup_{\tau,A(\cdot)}\Big\{\psi\big(\gamma(\tau)\big)-\int_t^{\tau} K(s,\gamma(s),A(s)\big)ds;\  \gamma(s)= \int_t^sf\big(\gamma(r),A(r)\big)dr+q\Big\}.
	\end{align} 
	From this definition, we see that for any $t_1\le t_2$, and $\gamma(s) = \int_{t_1}^s f(\gamma(r), A(r) ) dr + \gamma(t_1)$, $A(\cdot) \in \mathbb{A}$, 
	\begin{align}\label{eqn:JpsiMono}
 		J_\psi (t_2, \gamma(t_2)) - J_\psi(t_1, \gamma(t_1))   \le  \int_{t_1}^{t_2} K\Big(s, \gamma(s) , A(s) \Big) ds.
	\end{align}
	From the dynamic programming principle, $J_\psi$ is the unique lower semi-continuous viscosity solution to the Hamilton-Jacobi-Bellman type quasi-variational inequality,
	\begin{align} \label{eqn:HJB}
		\min\left\{\begin{array}{r} -\psi(q)+J_\psi(t,q), \\ -\frac{\partial}{\partial t} J_\psi(t,q)-{H}\big(t,q,\nabla J_\psi(t,q)\big)\end{array}\right\}=0,
	\end{align}
	where the Hamiltonian is $H(t,q,p)=\sup_{A\in \mathbb{A}}\Big\{p\cdot f(q,A)-K(t,q,A)\Big\}$, and the negative sign is introduced to be consistent with the definition of viscosity solutions.
	The equation \eqref{eqn:dynamicProgramming} plays the role of the `double convexification' procedure of optimal transportation, along with the replacement of $\psi$ by the maximal function satisfying $\psi\leq J_\psi$, given by $\psi(q)=\inf_t J_\psi(t,q)$.
	Recall  that we defined the dual problem in the introduction as
	\begin{align}\label{eqn:Dmunu}
 		D(\mu,\nu) := \sup_{\psi}\Big\{\int_{\R^n}\psi(y)\nu(dy)-\int_{\R^n}J_\psi(0,x)\mu(dx);\ J_\psi\ {\rm solves}\ (\ref{eqn:HJB})\Big\}.
	\end{align}
	\begin{proposition}\label{prop:HJB}
		Under the same assumptions of Proposition{\normalfont~\ref{prop:MKattainment}}, we have that 
		$$D(\mu,\nu) =D_1(\mu,\nu)= V(\mu, \nu).$$ 
		Moreover, there is an optimal $\psi^*$ such that the pair $(\psi^*(\cdot)=\inf_tJ_{\psi^*}(t,\cdot),  \phi^*(\cdot)=J_{\psi^*}(0, \cdot))$ is  an optimizer for $D_1(\mu, \nu)$, and hence satisfies \eqref{eqn:pi-a.e.-equal-c}. 
	\end{proposition}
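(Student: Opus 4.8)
The plan is to leverage the Kantorovich duality of Proposition~\ref{prop:MKattainment}, which already gives $D_1(\mu,\nu)=V(\mu,\nu)$, and to supply a ``dynamic'' substitute for the double $c$-transform, namely the map $\psi\mapsto\inf_tJ_\psi(t,\cdot)$. Two inequalities are needed, weak duality $D(\mu,\nu)\le D_1(\mu,\nu)$ and the reverse inequality, and I would obtain both — together with the ``moreover'' statement — by exhibiting a single distinguished optimizer $\psi^*$.

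\emph{Weak duality.} For $\psi\in C(\R^n)$ (with $J_\psi(0,\cdot)$ finite $\mu$-a.e., the only relevant case) I claim the pair $(\psi,J_\psi(0,\cdot))$ is admissible in \eqref{eqn:MKDualCost}. Indeed, given $x,y$ and a controlled trajectory from $x$ to $y$ arriving at time $\tau$, \eqref{eqn:JpsiMono} with $t_1=0$, $t_2=\tau$ gives $J_\psi(\tau,y)-J_\psi(0,x)\le\int_0^\tau K\,ds$, while $J_\psi(\tau,y)\ge\psi(y)$ (take $\tau=t$ in \eqref{eqn:dynamicProgramming}); taking the infimum over trajectories yields $\psi(y)-J_\psi(0,x)\le c(x,y)$. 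Since $J_\psi(0,\cdot)$ is continuous — by a concatenation/ODE-stability argument identical to that of Lemma~\ref{lem:lsc}, using \ref{itm:controllable} — this pair is a legitimate competitor with the same objective value, so $D(\mu,\nu)\le D_1(\mu,\nu)$.

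\emph{The optimizer.} Let $(\psi_0,\phi_0)$ attain $D_1(\mu,\nu)$ as in Proposition~\ref{prop:MKattainment}; passing to the optimized ($c$-conjugate) pair keeps it optimal, so we may assume $\phi_0(x)=\sup_y\{\psi_0(y)-c(x,y)\}$ and $\psi_0(y)=\inf_x\{c(x,y)+\phi_0(x)\}$, with $\psi_0$ Lipschitz. First, using \ref{itm:controllable} and $\varepsilon$-optimal trajectories in \eqref{eqn:MKCost}, a two-sided estimate shows $J_{\psi_0}(0,\cdot)=\phi_0$. Set $\psi^*:=\inf_tJ_{\psi_0}(t,\cdot)$; since $J_{\psi_0}(t,q)\ge\psi_0(q)$ for every $t$, we have $\psi^*\ge\psi_0$. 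The key step is the idempotency identity $J_{\psi^*}=J_{\psi_0}$: ``$\ge$'' is the monotonicity of $\psi\mapsto J_\psi$, while ``$\le$'' follows because for any trajectory with $\gamma(t)=q$, \eqref{eqn:JpsiMono} gives $\psi^*(\gamma(\tau))-\int_t^\tau K\,ds\le J_{\psi_0}(\tau,\gamma(\tau))-\int_t^\tau K\,ds\le J_{\psi_0}(t,q)$, after which one takes the supremum over $(\tau,A)$. Hence $\inf_tJ_{\psi^*}(t,\cdot)=\psi^*$ and $J_{\psi^*}(0,\cdot)=\phi_0$, so the pair attached to $\psi^*$ is precisely $(\psi^*,\phi_0)$. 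By the weak-duality step this pair is admissible in \eqref{eqn:MKDualCost}, and since $\psi^*\ge\psi_0$ its objective is at least $\int\psi_0\,d\nu-\int\phi_0\,d\mu=D_1(\mu,\nu)$, so it is optimal for $D_1$. Finally the $D$-objective at $\psi^*$ equals $\int\psi^*\,d\nu-\int\phi_0\,d\mu=D_1(\mu,\nu)$, giving $D(\mu,\nu)\ge D_1(\mu,\nu)$; combined with weak duality and Proposition~\ref{prop:MKattainment} this yields $D=D_1=V$ and the stated optimality of $\psi^*$, while \eqref{eqn:pi-a.e.-equal-c} follows by applying the last line of Proposition~\ref{prop:MKattainment} to the $D_1$-optimal pair $(\psi^*,\phi_0)$.

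\emph{Main obstacle.} The crux is the idempotency identity $J_{\inf_sJ_{\psi_0}(s,\cdot)}=J_{\psi_0}$ — i.e.\ that $\psi^*$ is a genuine fixed point of the double-convexification map $\psi\mapsto\inf_tJ_\psi(t,\cdot)$ — which is the dynamic analogue of the idempotency of the $c$-transform and is exactly where the sub-solution property of the HJB quasi-variational inequality, in its integrated form \eqref{eqn:JpsiMono}, enters essentially. A secondary, more technical point is establishing the regularity (local Lipschitz continuity) of $J_{\psi_0}(0,\cdot)$ and of $t\mapsto J_{\psi_0}(t,\cdot)$ needed to place $\psi^*$ and $\phi_0$ in $C(\R^n)$; this runs along the lines of Lemma~\ref{lem:lsc}.
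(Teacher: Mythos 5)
Your proposal is correct and follows essentially the same route as the paper: replace $\phi$ by $J_\psi(0,\cdot)$ and $\psi$ by $\inf_tJ_\psi(t,\cdot)$, treating this as a dynamic analogue of the double $c$-transform. The only real difference is presentational — you make explicit two facts the paper leaves implicit (that $J_{\psi_0}(0,\cdot)=\phi_0$ for an optimized Kantorovich pair, and the idempotency $J_{\psi^*}=J_{\psi_0}$, both correctly derived from \eqref{eqn:JpsiMono}), which is a useful clarification rather than a different argument.
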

	\begin{proof} 
		Let $(\psi, \phi)$ be an optimized Kantorovich pair for $D_1(\mu, \nu)$. In other words,  we have that 
		\begin{align}\label{eqn:MKConstraint}
			\psi(y)-\phi (x)\leq c(x,y) \quad \forall\ x, y \in \R^n, 
		\end{align}
		\[
			\psi(y)=\inf_{x\in \R^n} c(x, y)+\phi(x) \quad {\rm and} \quad\phi(x)=\sup_{y\in \R^n} \psi(y)-c(x, y).
		\]	
		Now, from  \eqref{eqn:dynamicProgramming}, the pair $(\psi, J_\psi (0, \cdot))$ obviously satisfies \eqref{eqn:MKConstraint}, thus is a Kantorovich pair for $D_1(\mu, \nu)$.  Since    $(\psi, \phi)$  is an optimized Kantorovich pair, then $J_\psi (0, \cdot)\le \phi (\cdot).$
		Therefore, the integral in \eqref{eqn:MKDualCost} for $D_1 (\mu, \nu)$ gets increased when replacing $\phi$ with $J_\psi$. Similarly, this integral increases when replacing $\psi$ with $\psi^*(\cdot)=\inf_tJ_\psi(t,\cdot)$. 
		This shows that 
		$
 			D_1 (\mu, \nu) = D(\mu,\nu) %:= \sup_{\psi}\Big\{\int_{\R^n}\psi(y)\nu(dy)-\int_{\R^n}J_\psi(0,x)\mu(dx);\ J_\psi\ {\rm solves}\ (\ref{eqn:HJB})\Big\}.
		$
		and that $(\psi^*, J_{\psi^*})$ is an optimized Kanotorovich pair.  
	\end{proof}

\section{An Eulerian Formulation}\label{sec:Eulerian}	

	We now introduce the Eulerian formulation which has cost $W(\mu,\nu)$, and its dual formulation with value $E(\mu,\nu)$.  We then prove that 
	$$ 
		D(\mu,\nu) = E(\mu, \nu) \leq W(\mu,\nu) \leq V(\mu,\nu).
	$$
	Taking into accounts the duality proven in Section~\ref{sec:Monge-Kantorovich}, all these values are then equal.

	Two key notions are the \emph{density process} $\rho:\R^+\rightarrow \mathcal{M^+}(\R^{n}\times \mathbb{A})$ and the \emph{stopping distribution} $\tilde{\rho}\in \mathcal{M^+}(\R^+\times \R^n)$, where $\mathcal{M}^+$ is the cone of non-negative finite Radon measures on the given metric space.  We assume $\rho$ is (weak*) measurable in the time variable. 
	The constraint on trajectories becomes the continuity equation \eqref{eqn:BB-continuity}
 	for $\rho$ and $\tilde{\rho}$, which we express in the weak form:
	\begin{align}
		\label{eqn:continuity} 
		&\int_{\R^n}\int_{\R^+} w(t,q)\tilde{\rho}(dt,dq)\\
		&\quad = \int_{\R^n} w(0,x)\mu(dx)\nn
 		+ \int_{\R^+} \int_{\mathbb{A}}\int_{\R^{n}}\Big[\frac{\partial}{\partial t}w(t,q)+f(q,A)\cdot \nabla w(t,q)\Big]\rho(t,dq,dA)dt
	\end{align}
	for all  $w \in C^1(\R \times \R^n)$.  The target constraint is given by 
	\begin{align}
		\label{eqn:Eulerian_target} 
		\int_{\R^n} u(y)\nu(dy) =&  \int_{\R^n}\int_{\R^+} u(q)\tilde{\rho}(dt,dq)\quad \hbox{for all $u\in C(\R^n).$}
	\end{align}
	If a pair $(\rho,\tilde{\rho})$ satisfies (\ref{eqn:continuity}) and (\ref{eqn:Eulerian_target}) then we say it is \emph{admissible}.

	We now consider the cost to be
	\begin{align} \label{eqn:Eulerian_cost} 
		W(\mu,\nu) := \inf\Big\{ \int_{\R^+}\int_{\mathbb{A}}   \int_{\R^{n}} K(t,q,A)\rho(t,dq,dA)dt ; \, \hbox{ $(\rho,\tilde{\rho})$ is admissible} \Big\}.
	\end{align}

	The following proposition associates to any transport plan, an approximating admissible density process. 
	\begin{proposition}\label{prop:Eulerian_embedding} 
		Suppose {\normalfont \ref{itm:continuity}}, {\normalfont \ref{itm:controllable}} and {\normalfont \ref{itm:compact}} hold.  If $\pi$ is a transport plan in $\Pi(\mu, \nu)$, then 
		for any $\epsilon>0$ there is an admissible density process and stopping distribution $(\rho,\tilde{\rho})$ such that
		\begin{align*}
  			\int_{\R^+}\int_{\mathbb{A}}   \int_{\R^{n}} K(t,q,A)\rho(t,dq,dA)dt \le \int_{\R^n}\int_{\R^n} c(x,y) \pi(dx,dy)+\epsilon
		\end{align*}
		As a consequence, 
		$
			W(\mu, \nu)\leq V(\mu,\nu).
		$
	\end{proposition}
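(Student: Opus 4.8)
The plan is to build $(\rho,\tilde\rho)$ by \emph{transporting mass along nearly optimal trajectories}: disintegrate $\pi$ and, for $\pi$-a.e.\ $(x,y)$, select a controlled trajectory from $x$ to $y$ whose cost is within $\epsilon$ of $c(x,y)$; then take $\rho$ to be the superposition (over $(x,y)\sim\pi$) of the occupation measures of these trajectories and $\tilde\rho$ the superposition of the Diracs at their terminal time--position pairs. Concretely, I would first invoke a measurable selection theorem (Kuratowski--Ryll-Nardzewski, or Aumann's theorem for the graph of the control-to-trajectory map) to fix a $\pi$-measurable family $(x,y)\mapsto(\tau_{x,y},A_{x,y}(\cdot))$ together with the associated curve $\gamma_{x,y}$ solving $\gamma_{x,y}(t)=\int_0^t f(\gamma_{x,y}(s),A_{x,y}(s))\,ds+x$ with $\gamma_{x,y}(\tau_{x,y})=y$ and $\int_0^{\tau_{x,y}}K(t,\gamma_{x,y}(t),A_{x,y}(t))\,dt\le c(x,y)+\epsilon$ (this measurable dependence is the only technical point here, and is standard). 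I then set, for $\Phi\in C_b(\R^+\times\R^n\times\mathbb{A})$,
\[
	\int_{\R^+}\!\!\int_{\mathbb{A}}\!\int_{\R^n}\Phi\,\rho(t,dq,dA)\,dt:=\int_{\R^n}\!\!\int_{\R^n}\Big(\int_0^{\tau_{x,y}}\Phi\big(t,\gamma_{x,y}(t),A_{x,y}(t)\big)\,dt\Big)\pi(dx,dy),
\]
which, after the obvious disintegration in $t$, yields a weak*-measurable curve $t\mapsto\rho(t)\in\mathcal M^+(\R^n\times\mathbb{A})$ with $|\rho(t)|\le 1$, and I let $\tilde\rho$ be the push-forward of $\pi$ under $(x,y)\mapsto(\tau_{x,y},y)$, i.e.\ $\int w\,d\tilde\rho=\int w(\tau_{x,y},y)\,\pi(dx,dy)$.

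Admissibility is then a direct check. The target condition \eqref{eqn:Eulerian_target} holds because $\gamma_{x,y}(\tau_{x,y})=y$ and the second marginal of $\pi$ is $\nu$. For the continuity equation \eqref{eqn:continuity}, apply the fundamental theorem of calculus to the absolutely continuous map $t\mapsto w(t,\gamma_{x,y}(t))$, using $\dot\gamma_{x,y}(t)=f(\gamma_{x,y}(t),A_{x,y}(t))$ for a.e.\ $t$, to get $w(\tau_{x,y},y)-w(0,x)=\int_0^{\tau_{x,y}}[\partial_t w+f\cdot\nabla w](t,\gamma_{x,y}(t))\,dt$; integrating against $\pi$ and using that the first marginal of $\pi$ is $\mu$ gives exactly \eqref{eqn:continuity}. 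By construction the cost is $\int_{\R^+}\int_{\mathbb{A}}\int_{\R^n}K\,\rho(t,dq,dA)\,dt=\int\big(\int_0^{\tau_{x,y}}K\,dt\big)\pi(dx,dy)\le\int c\,d\pi+\epsilon$.

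The one genuine obstacle is that this $\rho$ need not be a \emph{finite} curve of measures in a usable sense: the total ``time-mass'' $\int\tau_{x,y}\,\pi(dx,dy)$ may be infinite, since nearly cost-optimal trajectories can be forced to run for a very long time (already for running costs of the type $K\sim|A|^2$, a vanishing transport cost requires $\tau\to\infty$). I would fix this by splitting off the set $S_N:=\{(x,y):\tau_{x,y}\le N\}$, whose $\pi$-measure tends to $1$ as $N\to\infty$: on $\pi|_{S_N}$ use the selection above, while on the small remainder $\pi|_{S_N^c}$ use instead the ``fast'' trajectories from the proof of Lemma \ref{lem:lsc}, which have $\tau\le C_1|x-y|$ and cost $\le\Lambda_Z|x-y|$, hence by \ref{itm:compact} bounded time-mass (these can again be chosen measurably). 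Since \eqref{eqn:continuity}, \eqref{eqn:Eulerian_target} and the cost are all additive with respect to the decomposition $\pi=\pi|_{S_N}+\pi|_{S_N^c}$ (and the corresponding splittings of $\mu$ and $\nu$ into the relevant marginals), the sum of the two constructions is admissible, with total cost at most $\int c\,d\pi+\epsilon+\Lambda_Z\,\mathrm{diam}\big(\mathrm{supp}\,\mu\cup\mathrm{supp}\,\nu\big)\,\pi(S_N^c)$. Choosing $N$ large makes the last term $<\epsilon$; relabelling $\epsilon$ gives the displayed inequality, and taking the infimum over $\pi\in\Pi(\mu,\nu)$ yields $W(\mu,\nu)\le V(\mu,\nu)$.
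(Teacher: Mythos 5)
Your construction is essentially the paper's: disintegrate $\pi$, select near-optimal trajectories for $\pi$-a.e.\ $(x,y)$, let $\rho$ be the superposition of the occupation measures and $\tilde\rho$ the push-forward of $\pi$ under $(x,y)\mapsto(\tau_{x,y},y)$, and verify admissibility by the fundamental theorem of calculus applied to $t\mapsto w(t,\gamma_{x,y}(t))$. The one place where you diverge is the handling of measurability and uniform boundedness of $\tau_{x,y}$ and the control range. The paper avoids abstract selection theorems and the potential blow-up of $\int\tau_{x,y}\,d\pi$ in a single stroke: for each $(x,y)$ it picks a near-optimal control valued in a compact subset of $\mathbb{A}$, extends it to a neighborhood $O\ni(x,y)$ by running the \emph{same} control from nearby starting points and concatenating with a short corrector path (using the continuity of $c$ from Lemma~\ref{lem:lsc}), and then takes a \emph{finite} subcover of $\mathrm{supp}\,\mu\times\mathrm{supp}\,\nu$. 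This yields a piecewise-constant-in-$(x,y)$ selection, which is trivially measurable, and on which $\tau$, $\gamma$, $A$ are uniformly bounded and the trajectories uniformly Lipschitz — so the finiteness of $\int\tau_{x,y}\,d\pi$ and the legitimacy of applying FTC to $w\circ\gamma$ come for free. Your route instead invokes Kuratowski--Ryll-Nardzewski/Aumann selection (fine in principle under \ref{itm:continuity}, \ref{itm:controllable}, though you'd need to argue that the relevant multifunction is closed-valued and measurable) and then patches the unboundedness of $\tau_{x,y}$ via the $S_N/S_N^c$ split with the ``fast'' trajectories of Lemma~\ref{lem:lsc} on the small remainder, exploiting linearity of the admissibility constraints. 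That works and is a perfectly valid alternative; the finite-cover device is just slightly more self-contained and kills both technical issues at once.
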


	\begin{proof}
		We must first approximate the transport plan $\pi$ by nearly optimal trajectories with $(t,\gamma(\cdot),A(\cdot))$ remaining in a compact set.  
		Fix $\epsilon >0$. By the definition of $c(x, y)$ \eqref{eqn:MKCost}, for each 
		$x$ and $y$, we can find $A(\cdot;x,y)$ with values in a compact subset of $\mathbb{A}$ and $\tau(x,y)$ such that the trajectory  
		$$
			\gamma(t; x,y): =\int_{0}^{t} f(\gamma(s;x,y), A(s; x, y) ) ds + x
		$$
		satisfies $\gamma(\tau(x,y),x,y)=y$ and the cost satisfies  $$\int_0^{\tau(x,y)} K(t,\gamma(t;x,y),A(t;x,y)\big)dt \le c(x,y)+\frac{\epsilon}{2}.$$
		By the continuity of $c$ from Lemma \ref{lem:lsc}, there is a neighborhood $O$ of $(x,y)$ such that for $(x', y')\in O$ the trajectory beginning at $x'$ with the same control policy $A(\cdot;x,y)$ is within $\epsilon$ of optimality, after concatenating with a trajectory to fix the endpoint at $y'$.  With this construction on a finite open cover of the compact supports of $\mu$ and $\nu$, we have that all trajectories and controls remain in a compact set and the times $\tau(x,y)$ are uniformly bounded.
		
		All of $\gamma(\cdot;x,y)$, $\tau(x,y)$ and $A(\cdot;x,y)$ can be chosen to be measurable in $x$ with respect to $\mu$ and measurable in $y$ with respect to $\nu$ and are thus measureable with respect to $\pi$. Moreover, from \ref{itm:controllable}, the trajectories $\gamma(\cdot;x,y)$ are  Lipschitz continuous in time on a bounded time interval. 
%		From \ref{itm:controllable}, the trajectories $\gamma(\cdot;x,y)$ are  Lipschitz continuous in time on a bounded time interval.  All of $\gamma(\cdot;x,y)$, $\tau(x,y)$ and $A(\cdot;x,y)$ are measurable in $x$ with respect to $\mu$ and measurable in $y$ with respect to $\nu$ and are thus measureable with respect to $\pi$. %Furthermore, we may assume that all trajectories remain in a compact subset of $\R^n\times \R^+$ by \ref{itm:compact}. 
		We now define $\tilde{\rho}$ (by the Riesz representation theorem) such that
		\begin{align}
			\int_{\R^n}\int_{\R^+} \beta(t,q)\tilde{\rho}(dt,dq)=\int_{\R^{n}}\int_{\R^n} \beta\big(\tau(x,y),y\big)\pi(dx,dy)\nn
		\end{align}
		for all continuous and compactly supported $\beta$. We also define $\rho$ at time $t$ in such a way that
		\begin{align}
			\int_{\mathbb{A}}\int_{\R^n} \eta(q,A)\rho(t,dq,dA)=\int_{\R^{n}}\int_{\R^n} 1\big\{t<\tau(x,y)\big\}\eta\big(\gamma(t;x,y),A(t;x,y)\big)\pi(dx,dy)\nn
		\end{align}
		for all continuous and compactly supported $\eta$. Since $\pi$ is nonnegative, we see that $\rho, \tilde \rho$ are also nonnegative measures, and by construction, $\tilde{\rho}$ satisfies (\ref{eqn:Eulerian_target}) and $\rho$ is measurable in time.  We consider a smooth test function $w:\R\times \R^n \to \R$, and (dropping the notational dependence of $\tau,A(\cdot),\gamma(\cdot)$ on $x,y$), we  see that 
	\begin{align}
		&\ \int_{\R^+}\int_{\mathbb{A}} \int_{\R^{n}}\Big[-\frac{\partial}{\partial t}w(t,q)-f(q,A)\cdot \nabla w(t,q)\Big]\rho(t,dq,dA)dt\nn\\
		&\qquad \qquad \qquad =\ \int_{\R^+}\int_{\R^{n}}\int_{\R^n}1\big\{t<\tau\big\}\left[ -\frac{d}{dt}w\big(t,\gamma(t)\big)\right]\pi(dx,dy)dt\nn \\
		&\qquad \qquad \qquad =\ -\int_{\R^{n}}\int_{\R^n} w\Big(\tau,\gamma(t)\Big)\pi(dx,dy)+\int_{\R^{n}}\int_{\R^n}w(0,x)\pi(dx,dy)\nn\\
		&\qquad \qquad \qquad =\ -\int_{\R^n}\int_{\R^+} w(t,y)\tilde{\rho}(dt,dy)+\int_{\R^n}w(0,x)\mu(dx)\nn,
	\end{align}
	which is our version of the continuity equation (\ref{eqn:continuity}). Note that the first two equations above are possible as $\gamma$ is Lipschitz in $t$ so $t\mapsto w(t,\gamma(t))$ is $W^{1,\infty}$, and $t\mapsto \rho(t, \cdot, \cdot)$ is measurable. Therefore, the pair $(\rho, \tilde \rho)$ is an admissible density process. Now, a similar calculation shows that for the corresponding costs, we have:
	\begin{align}
		\int_{\R^+} \int_{\mathbb{A}}  \int_{\R^{n}} K(t,q,A)\rho(t,dq,dA)dt\nn
		=&\ \int_{\R^+}\int_{\R^{n}}\int_{\R^n} 1\{t<\tau\}K\big(t,\gamma(t),A(t)\big)\pi(dx,dy)dt\nn\\
		=&\ \int_{\R^{n}}\int_{\R^n}\left[\int_0^{\tau} K\big(t,\gamma(t),A(t)\big)dt\right] \pi(dx,dy)\nn\\
		\leq& \int_{\R^{n}}\int_{\R^n}c(x,y)\pi(dx,dy)+\epsilon.\nn
	\end{align}
	As $\epsilon$ is arbitrary,  we get that $W(\mu,\nu)\leq V(\mu,\nu)$.
	\end{proof}

	From the duality principle of linear programming, we can formulate the dual problem to the Eulerian formulation as follows:
	\begin{align} \label{eqn:dualCost}
		\hbox{ Maximize} \quad 	\int_{\R^n}\psi(y)\nu(dy)-\int_{\R^n}J(0,x)\mu(dx),
	\end{align}
	over $\psi \in C(\R^n)$ and $J\in C^1(\R \times \R^n)$ subject to
	\begin{align} \label{eqn:dual_constraints}
		\psi(q) - J(t,q)\leq&\ 0,\ &\ {\rm for\ all}\ (t,q)\in \R^+\times \R^n\\
		\frac{\partial}{\partial t} J(t,q)+f(q,A)\cdot \nabla J(t,q)\leq&\ K(t,q,A),\ &\ {\rm for\ all}\ (t,q,A)\in  \R^+\times \R^{n}\times  \mathbb{A}. \nn
	\end{align}
 	Notice that \eqref{eqn:dual_constraints} means that $J$ is a supersolution to \eqref{eqn:HJB}. 

	We let $E(\mu,\nu)$ denote the optimal value of this dual problem. 
	\begin{proposition}\label{prop:Eulerian_duality}
		For all $(J,\psi)\in  C^1(\R^+ \times \R^n)\times C(\R^n)$ that satisfy {\normalfont (\ref{eqn:dual_constraints})}, and any $(\rho,\tilde{\rho})$ that satisfy {\normalfont (\ref{eqn:continuity})} and {\normalfont (\ref{eqn:Eulerian_target})}, we have 
		\begin{align} \label{eqn:weakDuality}	
			\int_{\R^n}\psi(y)\nu(dy)-\int_{\R^n}J(0,x)\mu(dx)\leq\int_{\R^+}\int_{\mathbb{A}}\int_{\R^{n}} K(t,q,A)\rho(t,dq,dA)dt.
		\end{align}  
		 In particular,  $E(\mu, \nu) \le W(\mu, \nu)$.  
	\end{proposition}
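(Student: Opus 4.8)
The strategy is a one-line duality computation: test the weak continuity equation (\ref{eqn:continuity}) against the feasible function $J$ itself, and then invoke the two sign conditions (\ref{eqn:dual_constraints}) together with the non-negativity of the measures $\rho$ and $\tilde{\rho}$. First I would dispose of the trivial case in which the right-hand side of (\ref{eqn:weakDuality}) is $+\infty$, and so assume it is finite. Since $J\in C^1(\R\times\R^n)$, it is an admissible test function in (\ref{eqn:continuity}), which gives
\[
  \int_{\R^n}\!\int_{\R^+}\! J(t,q)\,\tilde{\rho}(dt,dq)
  = \int_{\R^n}\! J(0,x)\,\mu(dx)
  + \int_{\R^+}\!\int_{\mathbb{A}}\!\int_{\R^n}\!\Big[\tfrac{\partial}{\partial t}J(t,q)+f(q,A)\cdot\nabla J(t,q)\Big]\rho(t,dq,dA)\,dt .
\]

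Next I would use the two constraints in (\ref{eqn:dual_constraints}). The second one states that the bracketed integrand above is $\le K(t,q,A)$ pointwise; since each $\rho(t,\cdot,\cdot)$ is a non-negative measure, the last term is bounded above by $\int_{\R^+}\int_{\mathbb{A}}\int_{\R^n}K(t,q,A)\rho(t,dq,dA)\,dt$, i.e.\ the quantity on the right of (\ref{eqn:weakDuality}). The first constraint states that $\psi(q)\le J(t,q)$ for every $(t,q)\in\R^+\times\R^n$; integrating this against the non-negative measure $\tilde{\rho}$ and then using the target constraint (\ref{eqn:Eulerian_target}),
\[
  \int_{\R^n}\psi(y)\,\nu(dy)=\int_{\R^n}\!\int_{\R^+}\psi(q)\,\tilde{\rho}(dt,dq)\ \le\ \int_{\R^n}\!\int_{\R^+} J(t,q)\,\tilde{\rho}(dt,dq).
\]
Combining the displayed identity with these two bounds yields
\[
  \int_{\R^n}\psi(y)\,\nu(dy)-\int_{\R^n}J(0,x)\,\mu(dx)\ \le\ \int_{\R^+}\!\int_{\mathbb{A}}\!\int_{\R^n}K(t,q,A)\rho(t,dq,dA)\,dt,
\]
which is precisely (\ref{eqn:weakDuality}). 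Taking the supremum over feasible $(J,\psi)$ on the left and the infimum over admissible $(\rho,\tilde{\rho})$ on the right then gives $E(\mu,\nu)\le W(\mu,\nu)$.

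The one point that is not purely formal, and which I expect to be the main obstacle, is the legitimacy of inserting $w=J$ into (\ref{eqn:continuity}): although $J$ is $C^1$, it need not be compactly supported, and $\rho$, $\tilde{\rho}$ are only finite Radon measures with no a priori bound on their supports, so one must ensure that all three integrals in the identity converge and that the manipulation is valid. I would deal with this by truncation, applying (\ref{eqn:continuity}) to $w=J\,\chi_R$ with $\chi_R$ a smooth space-time cutoff equal to $1$ on $\{|q|,t\le R\}$ and supported in $\{|q|,t\le 2R\}$, and letting $R\to\infty$. The boundary terms $\int J(0,\cdot)\chi_R\,d\mu$ and $\int\psi\chi_R\,d\nu$ stabilize because $\mu$ and $\nu$ have compact support by \ref{itm:compact}; the cost term $\int\!\int\!\int K\rho\,dt$ is finite by the reduction above and dominates the contribution of the principal part $\chi_R(\partial_t J+f\cdot\nabla J)\le K$; and the error terms produced by the derivatives of $\chi_R$ must be shown to vanish as $R\to\infty$ using finiteness of the cost together with the finiteness of the total masses of $\rho$ and $\tilde{\rho}$ --- this tail estimate is the only genuinely technical ingredient. (If one instead restricts, or may restrict without loss, to admissible pairs whose support in $(t,q)$ is bounded --- which is automatically the case for the density processes produced in Proposition~\ref{prop:Eulerian_embedding} --- this step is immediate from the continuity of $J$, $\nabla J$ and $f$.)
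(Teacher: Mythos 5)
Your proof is correct and takes the same route as the paper: test the weak continuity equation (\ref{eqn:continuity}) against $J$ and the target constraint (\ref{eqn:Eulerian_target}) against $\psi$, then apply the two inequalities in (\ref{eqn:dual_constraints}) together with non-negativity of $\rho$ and $\tilde{\rho}$. The paper states this in one sentence; the care you take about inserting a non-compactly-supported $J$ into (\ref{eqn:continuity}) is a genuine subtlety the paper elides, and your truncation argument is the right remedy.
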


	\begin{proof}
		This weak duality inequality is immediate from plugging in $J$ and $\psi$ as test functions in (\ref{eqn:continuity}) and (\ref{eqn:Eulerian_target}).  
	\end{proof}

	This dual problem \eqref{eqn:dualCost}-\eqref{eqn:dual_constraints}  is indeed equivalent to the duality studied in Section \ref{sec:Monge-Kantorovich}, as we can reduce \eqref{eqn:dual_constraints} to \eqref{eqn:HJB}. To see this,  consider the pointwise  minimum of $J(t,q)$ amongst all supersolutions and the pointwise  maximum of $\psi$ among all functions  less than or equal to $J$. These yield   a solution to (\ref{eqn:HJB}). 

	\begin{proposition}\label{prop:dual_equivalence}
	 	Given $\psi\in C(\R^n)$ uniformly bounded, then the infimum  %maximum
		 over all $J$ that satisfy {\normalfont (\ref{eqn:dual_constraints})} is attained by the function $J_\psi$ defined in {\normalfont (\ref{eqn:dynamicProgramming})}. Equivalently, this is the unique viscosity solution to {\normalfont (\ref{eqn:HJB})}, and therefore, the dual value satisfies $E(\mu,\nu)=D(\mu,\nu)$.
	\end{proposition}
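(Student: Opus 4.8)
The plan is to prove two facts from which the proposition follows: (A) every $J\in C^1(\R\times\R^n)$ satisfying the dual constraints (\ref{eqn:dual_constraints}) dominates $J_\psi$ pointwise on $\R^+\times\R^n$; and (B) $J_\psi$ is the locally uniform limit of a sequence $(J^\delta)$ of $C^1$ functions that satisfy (\ref{eqn:dual_constraints}). Given (A) and (B), the pointwise infimum of the admissible $J$'s equals $J_\psi$ --- so the infimum is ``attained'' by $J_\psi$ in this sense, $J_\psi$ itself being, by the dynamic programming principle, the unique lower semicontinuous viscosity solution of (\ref{eqn:HJB}). Moreover, for each bounded continuous $\psi$, (A) gives $\int_{\R^n}J(0,\cdot)\,d\mu\ge\int_{\R^n}J_\psi(0,\cdot)\,d\mu$ for all admissible $J$, while (B) --- using that $\mu$ has compact support (\ref{itm:compact}) so that $J^\delta(0,\cdot)\to J_\psi(0,\cdot)$ uniformly on $\mathrm{supp}\,\mu$ --- gives the reverse inequality in the limit; hence $\inf_J\int J(0,\cdot)\,d\mu=\int J_\psi(0,\cdot)\,d\mu$ over admissible $J$, and taking the supremum over $\psi$ yields $E(\mu,\nu)=\sup_\psi\big(\int\psi\,d\nu-\int J_\psi(0,\cdot)\,d\mu\big)=D(\mu,\nu)$. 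This is consistent with the chain $E\le W\le V=D$ already established in Propositions~\ref{prop:Eulerian_duality}, \ref{prop:Eulerian_embedding} and \ref{prop:HJB}.

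For (A), fix such a $J$ and a point $(t,q)$. For any control $A(\cdot)$ valued in $\mathbb{A}$, the associated trajectory $\gamma(s)=\int_t^sf(\gamma(r),A(r))\,dr+q$ is Lipschitz by \ref{itm:controllable}, so $s\mapsto J(s,\gamma(s))$ is Lipschitz, hence absolutely continuous, and the second inequality of (\ref{eqn:dual_constraints}) gives $\frac{d}{ds}J(s,\gamma(s))=\partial_tJ(s,\gamma(s))+f(\gamma(s),A(s))\cdot\nabla J(s,\gamma(s))\le K(s,\gamma(s),A(s))$ for a.e.\ $s$. Integrating over $[t,\tau]$ for any $\tau\ge t$ and using the obstacle inequality $J\ge\psi$ gives $J(t,q)\ge J(\tau,\gamma(\tau))-\int_t^\tau K\,ds\ge\psi(\gamma(\tau))-\int_t^\tau K\,ds$; taking the supremum over $(\tau,A(\cdot))$ gives $J(t,q)\ge J_\psi(t,q)$.

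For (B) I would regularize $J_\psi$ by convolution. Under \ref{itm:continuity}--\ref{itm:controllable} with $\psi$ bounded continuous, $J_\psi$ is bounded (by $\|\psi\|_\infty$) and locally Lipschitz, so by Rademacher's theorem it satisfies $J_\psi\ge\psi$ everywhere and $\partial_tJ_\psi(t,q)+f(q,A)\cdot\nabla J_\psi(t,q)\le K(t,q,A)$ for all $A\in\mathbb{A}$ at a.e.\ $(t,q)$, the latter being the infinitesimal form of (\ref{eqn:JpsiMono}). After extending $J_\psi$ to $t<0$ (e.g.\ by $J_\psi(t,q):=J_\psi(0,q)+Ct$ with $C$ sufficiently negative, which keeps both inequalities valid), set $J^\delta:=J_\psi\ast\rho_\delta$ with a standard mollifier $\rho_\delta$ in $(t,q)$. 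Then $J^\delta\in C^\infty$, and at each $(t,q)$ the pair $(\partial_tJ^\delta,\nabla J^\delta)(t,q)$ is an average of pairs $(\partial_tJ_\psi,\nabla J_\psi)(t-s,q-z)$ with $|s|,|z|\le\delta$, each lying in the \emph{convex} set $\{(a,p):a+f(q-z,A)\cdot p\le K(t-s,q-z,A)\ \forall A\}$; using that $f$ is Lipschitz in $q$, that $|\nabla J_\psi|$ is bounded, and the continuity of $K$, one obtains $\partial_tJ^\delta+f(q,A)\cdot\nabla J^\delta\le K(t,q,A)+\epsilon(\delta)$ for all $A$, and $J^\delta\ge\psi-\epsilon(\delta)$, with $\epsilon(\delta)\to0$. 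A vanishing correction (a small positive constant, plus if necessary a term affine and decreasing in $t$ on the relevant bounded time horizon) restores exact admissibility, while $J^\delta\to J_\psi$ locally uniformly. Here the compact-support hypothesis \ref{itm:compact}, combined with \ref{itm:controllable}, is used to keep all relevant trajectories, controls and gradients in fixed compact sets, making the moduli above uniform (as in the proof of Proposition~\ref{prop:Eulerian_embedding}).

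The main obstacle is step (B): propagating the whole family of constraints indexed by the possibly noncompact control set $\mathbb{A}$ through the mollification. The feature that makes it go through is the convexity of the constraint in the variables $(\partial_tJ,\nabla J)$ --- so that an average of (near-)solutions is again a (near-)solution --- together with the compactness afforded by \ref{itm:compact} and \ref{itm:controllable}, which turns the pointwise continuity of $f$ and $K$ into uniform control over the error terms that the final correction then absorbs.
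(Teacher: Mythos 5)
Your two-step decomposition mirrors the structure of the paper's argument: step (A), integrating the constraints (\ref{eqn:dual_constraints}) along trajectories and taking the sup, is precisely the verification theorem, i.e.\ the concrete instance of the comparison principle the paper cites, and is correct. For step (B) you diverge from the paper, which simply invokes Perron's method to identify $J_\psi$ as the infimum over supersolutions; you instead try to build the approximating $C^1$ supersolutions by hand via mollification. This is a genuinely different, more constructive route, but as sketched it rests on two facts that are not available at this stage of the paper and are not established by your argument.

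First, you assert that $J_\psi$ is locally Lipschitz under only \ref{itm:continuity}--\ref{itm:compact} with $\psi$ bounded and continuous. That is not clear. The paper establishes Lipschitz regularity of $J_\psi$ only later, under the additional coercivity \ref{itm:HCoercive}, by citing \cite{Cannarsa2010Holder} (see the proof of Theorem~\ref{thm:Monge_map}). For $\psi$ merely bounded and continuous, $J_\psi \geq \psi$ with equality on the stopping set, so $J_\psi$ inherits whatever lack of Lipschitz regularity $\psi$ has there (take $K$ large so that $J_\psi\equiv\psi$: then $J_\psi$ is no more regular than $\psi$). Since Proposition~\ref{prop:MKattainment} guarantees an \emph{optimal} $\psi^*$ is Lipschitz, your argument could perhaps be salvaged for that particular $\psi$, but it would not prove the proposition as stated for general bounded continuous $\psi$, nor even the pointwise identity $J_\psi=\inf J$.

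Second, even granting Lipschitz $J_\psi$, you acknowledge that the real difficulty is pushing the whole $A$-indexed family of constraints through the mollification, and claim convexity of the constraint set resolves it. Convexity handles averaging within a \emph{fixed} half-space, but the half-spaces $\{(a,p): a + f(q-z,A)\cdot p \le K(t-s,q-z,A)\}$ vary with $(s,z)$, so the average lands in the $(t,q)$-set only up to an error controlled by the moduli of continuity of $K$ in $(t,q)$ and of $f$ in $q$. Under \ref{itm:continuity}--\ref{itm:controllable}, $\mathbb{A}$ need not be compact, and only continuity (not equicontinuity) of $K$ and $f$ in $A$ is assumed, so there is no a priori reason this error $\epsilon(\delta)$ is uniform in $A\in\mathbb{A}$. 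Relatedly, the final ``small correction'' you propose (adding a term affine and decreasing in $t$) repairs the gradient constraint on a bounded time interval but then makes the additive term negative for large $t$, breaking the obstacle constraint $J\ge\psi$; since (\ref{eqn:dual_constraints}) is required on all of $\R^+\times\R^n\times\mathbb{A}$, this patch is not globally admissible. Either one must truncate to a finite horizon (justified by an a priori bound on stopping times, which would require something like \ref{itm:tCoercive}) or find a correction that decays appropriately, and this needs to be spelled out.

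In short: (A) is fine and matches the paper; (B) is a different, more explicit approach to the Perron step, but as written it leaves unproven the Lipschitz regularity of $J_\psi$ and the uniformity in $A$ of the mollification error, and the final correction term is not globally admissible.
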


	%Add more detail or at least reference to this proof. AP
	\begin{proof}
	 Recall the definition of $J_\psi$ from \eqref{eqn:dynamicProgramming}, which is the unique viscosity solution to \eqref{eqn:HJB}. 	Therefore,  by the comparison principle, if ${J}$ satisfies \eqref{eqn:dual_constraints}, then  ${J}(t,q)\ge J_\psi(t,q)$ and we have $E(\mu,\nu)\leq D(\mu,\nu)$. Perron's method then yields that $J_\psi$ is the infimum of all supersolutions $J$; see \cite{bardi2008optimal}. Comparing with the definition of $D(\mu, \nu)$ in \eqref{eqn:Dmunu}, this proves that the supremum of the value over $J$ is attained at $J_\psi$ and that $E(\mu, \nu) = D(\mu,\nu)$. 
	\end{proof}
	Now we can verify that the problems are indeed equivalent.
	\begin{theorem}\label{thm:Eulerian_equality}
		Given the hypotheses {\normalfont \ref{itm:continuity}}, {\normalfont \ref{itm:controllable}}, {\normalfont \ref{itm:compact}}, then  the following hold: %\eqref{eqn:BB} holds, namely, 
		\begin{equation} 
			D(\mu,\nu) = E(\mu, \nu) = W(\mu,\nu) = V(\mu,\nu).
		\end{equation}
	\end{theorem}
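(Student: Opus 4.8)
The plan is to simply assemble the chain of (in)equalities that the preceding propositions have established, and observe that it closes on itself, forcing all four quantities to coincide. No new analytic work is needed; the argument is pure bookkeeping, which is exactly why the theorem is stated as a corollary of the earlier results.

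First I would record the two equalities coming from the Monge--Kantorovich side. Proposition~\ref{prop:HJB} gives $D(\mu,\nu) = D_1(\mu,\nu) = V(\mu,\nu)$ under hypotheses \ref{itm:continuity}, \ref{itm:controllable}, \ref{itm:compact} (its hypotheses being those of Proposition~\ref{prop:MKattainment}, which are precisely these three). Next, Proposition~\ref{prop:dual_equivalence} identifies the Eulerian dual value with $D$: since $\psi$ in the definition of $D(\mu,\nu)$ may be taken bounded (e.g. Lipschitz with compactly supported relevant data, by Proposition~\ref{prop:MKattainment} and \ref{itm:compact}), the infimum over supersolutions $J$ of \eqref{eqn:dual_constraints} is attained at $J_\psi$, whence $E(\mu,\nu) = D(\mu,\nu)$.

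Then I would bring in the two inequalities on the Eulerian side. Proposition~\ref{prop:Eulerian_duality} (weak duality for the Eulerian linear program) gives $E(\mu,\nu) \le W(\mu,\nu)$, and Proposition~\ref{prop:Eulerian_embedding} gives $W(\mu,\nu) \le V(\mu,\nu)$ by approximating any transport plan $\pi \in \Pi(\mu,\nu)$ by an admissible density--process/stopping--distribution pair $(\rho,\tilde\rho)$ with cost within $\epsilon$. Concatenating everything,
\[
V(\mu,\nu) = D(\mu,\nu) = E(\mu,\nu) \le W(\mu,\nu) \le V(\mu,\nu),
\]
so the two outer terms are equal, which pins down every inequality in the chain as an equality and yields $D(\mu,\nu) = E(\mu,\nu) = W(\mu,\nu) = V(\mu,\nu)$.

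The only point that requires a word of care — and hence the closest thing to an obstacle — is checking that the hypothesis sets of the invoked propositions match: Proposition~\ref{prop:dual_equivalence} is stated for $\psi$ uniformly bounded, so I would note explicitly that the optimizer $\psi^*$ furnished by Proposition~\ref{prop:HJB} (equivalently Proposition~\ref{prop:MKattainment}) is Lipschitz, and combined with the compact support assumption \ref{itm:compact} one may restrict attention to such bounded $\psi$ without changing $D(\mu,\nu)$, so the identification $E = D$ applies. Everything else is a direct citation of the displayed results above.
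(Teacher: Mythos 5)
Your proposal is correct and follows essentially the same chain of citations as the paper's proof: $W \le V$ from Proposition~\ref{prop:Eulerian_embedding}, $E \le W$ from Proposition~\ref{prop:Eulerian_duality}, $E = D$ from Proposition~\ref{prop:dual_equivalence}, and $V = D$ from Section~\ref{sec:Monge-Kantorovich}, after which the sandwich closes. Your extra remark about verifying the boundedness hypothesis of Proposition~\ref{prop:dual_equivalence} via the Lipschitz regularity of $\psi^*$ and \ref{itm:compact} is a reasonable bit of diligence that the paper leaves implicit.
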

	\begin{proof}
		Proposition \ref{prop:Eulerian_embedding} implies that $W(\mu,\nu)\leq V(\mu,\nu)$ and Proposition \ref{prop:Eulerian_duality} implies $E(\mu,\nu)\leq W(\mu,\nu)$.  Proposition \ref{prop:dual_equivalence} yields that $E(\mu,\nu)=D(\mu,\nu)$. The rest follows from the fact that   $V(\mu,\nu)=D(\mu,\nu)$ shown in Section~\ref{sec:Monge-Kantorovich}.
	\end{proof}

\section{Necessary Optimality Conditions}\label{sec:optimality_conditions}

	The next stage of our analysis is to relate the Hamilton-Jacobi-Bellman quasi-variational inequality (\ref{eqn:HJB}) back into optimality criteria for paths involved in the primal problem.

	\begin{theorem}\label{thm:Pontryagin}
		Suppose {\normalfont \ref{itm:continuity}}, {\normalfont \ref{itm:controllable}} and {\normalfont \ref{itm:compact}} hold and also that $(q,A)\mapsto \nabla f$ is continuous as is $(t,q,A)\mapsto \nabla K$. We let $\pi$, $\psi$ and $J_\psi$ be optimal for \eqref{eqn:transport_problem} and \eqref{eqn:Dmunu}, resp., and suppose that the optimal control problem  \eqref{eqn:MKCost} for the cost $c$ is attained for $\pi$-a.e. $(x,y)$ at $\tau^{x,y}$ and $A^{x,y}(\cdot)$ that are measurable with respect to $x$ and $y$, and for a.e.\ $(x,y)$, $A^{x,y}(\cdot)$ is piecewise continuous in time. We let $\gamma^{x,y}(\cdot)$ denote the corresponding trajectories.  Then the following hold: 
		\begin{enumerate}
			\item  If $\gamma(\cdot),\tau$ and $A(\cdot)$ attain the optimal cost $c(x,y)$ with $\gamma(0)=x$ and $\gamma(\tau)=y$, then $J_\psi(\tau,y)=\psi(y)$.
			
\item 
There is $p\in C([0,\tau]; \R^n)$ that solves, at times when $A(\cdot)$ is continuous,
				\begin{align}\label{eqn:costate}
					\dot{p}(t) = -p(t)^\top\nabla f\big(t,\gamma(t),A(t)\big)+\nabla K\big(t,\gamma(t),A(t)\big),
				\end{align}
				and such that for a.e.\ $t\in[0,\tau]$
				\begin{align}\label{eqn:maximum_principle}
					H\big(t,\gamma(t),p(t)\big)=p(t)\cdot f\big(\gamma(t),A(t)\big)-K(t,\gamma(t),A(t)\big),
				\end{align}
				while the following transversality condition holds:
				\begin{align}\label{eqn:transversality}
					H\big(\tau,y,p(\tau)\big)=0.
				\end{align}

			\item	If $\psi$ is differentiable at $y$, then $p(\tau)=\nabla \psi(y)$, and if $J_\psi$ is differentiable at $(t,\gamma(t))$, then $p(t)=\nabla J_\psi(t,\gamma(t))$.
		\end{enumerate}
	\end{theorem}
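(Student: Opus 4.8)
The plan is to prove the three items in order, with item (1) serving as the launching pad for the Pontryagin-type argument in items (2)–(3). For item (1), I would combine the inequality \eqref{eqn:JpsiMono} with the optimality of the pair $(\psi, J_\psi(0,\cdot))$ for $D_1(\mu,\nu)$ established in Proposition~\ref{prop:HJB}. Concretely, fix $(x,y)$ in the support of $\pi$ where the cost is attained, with trajectory $\gamma$, end time $\tau$ and control $A$. Applying \eqref{eqn:JpsiMono} on $[0,\tau]$ gives $J_\psi(\tau,y) - J_\psi(0,x) \le \int_0^\tau K(s,\gamma(s),A(s))\,ds = c(x,y)$. Since $\psi \le J_\psi$ everywhere, in particular $\psi(y) \le J_\psi(\tau,y)$, so $\psi(y) - \phi^*(x) \le J_\psi(\tau,y) - J_\psi(0,x) \le c(x,y)$, where $\phi^* = J_\psi(0,\cdot)$. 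But by \eqref{eqn:pi-a.e.-equal-c} (valid for the optimizer of Proposition~\ref{prop:HJB}) we have $\psi(y) - \phi^*(x) = c(x,y)$ for $\pi$-a.e.\ $(x,y)$. Hence the chain of inequalities collapses to equalities, forcing $\psi(y) = J_\psi(\tau,y)$ (and also showing the trajectory is a genuine maximizer in \eqref{eqn:dynamicProgramming} starting from $(0,x)$).

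For item (2), the key observation from the collapsed inequality above is that $\gamma$, restricted to any subinterval $[t_1,t_2] \subset [0,\tau]$, is optimal for the value function \eqref{eqn:dynamicProgramming} (otherwise one could improve the cost $c(x,y)$ by splicing). This makes $(\gamma, A, \tau)$ a solution of a fixed-endpoint optimal control problem on each interval where $A$ is continuous, so I would invoke the classical Pontryagin maximum principle (e.g.\ as in \cite{P} or \cite{bardi2008optimal}) to produce a costate $p \in C([0,\tau];\R^n)$ solving the adjoint equation \eqref{eqn:costate} piecewise, with the Hamiltonian-maximization condition \eqref{eqn:maximum_principle} holding a.e. Continuity of $p$ across the (finitely many) discontinuity points of $A$ follows because $\nabla f$ and $\nabla K$ are continuous and the Hamiltonian value $H(t,\gamma(t),p(t))$ is continuous in $t$ — one can check $t \mapsto H(t,\gamma(t),p(t))$ is absolutely continuous with a.e.\ derivative $\partial_t H$ (the state- and costate-derivative terms cancel by the Hamiltonian structure), and matching at corners pins down the constant. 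The transversality condition \eqref{eqn:transversality} is exactly the first-order condition for optimality of the \emph{free} end time $\tau$: differentiating the cost $\int_0^\tau K\,ds - \psi(\gamma(\tau))$ — or rather the reachable-cost expression — in $\tau$ at the optimum gives $H(\tau,y,p(\tau))=0$; alternatively it follows from $J_\psi(\tau,y) = \psi(y)$ together with the quasi-variational inequality \eqref{eqn:HJB}, since at the free boundary the second branch $-\partial_t J_\psi - H = 0$ must hold in the viscosity sense, and evaluating along the optimal trajectory forces the Hamiltonian term to vanish.

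For item (3), I would argue that $J_\psi$ is monotone along the optimal trajectory in the precise sense of \eqref{eqn:JpsiMono}, with equality (from item (1)), so if $J_\psi$ is differentiable at $(t,\gamma(t))$ then $\frac{d}{dt}J_\psi(t,\gamma(t)) = K(t,\gamma(t),A(t))$, i.e.\ $\partial_t J_\psi + \nabla J_\psi \cdot f = K$; combined with the supersolution inequality in \eqref{eqn:HJB} and the definition of $H$, this says $\nabla J_\psi(t,\gamma(t))$ achieves the supremum defining $H$, and a standard envelope/uniqueness argument (using that the Pontryagin $p$ also achieves that supremum with the same Hamiltonian value) identifies $p(t) = \nabla J_\psi(t,\gamma(t))$; the boundary case $p(\tau) = \nabla\psi(y)$ follows from item (1) ($J_\psi(\tau,\cdot) = \psi$ near $y$ in the sense that they touch at $y$, with $J_\psi \ge \psi$) plus differentiability of $\psi$ at $y$, giving $\nabla J_\psi(\tau,y) = \nabla\psi(y)$ and then matching $p$. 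I expect the main obstacle to be the regularity bookkeeping in item (2): making rigorous the continuity of the costate $p$ across the discontinuities of the piecewise-continuous control, and carefully deriving the transversality condition for the free end time without assuming differentiability of $J_\psi$ — this likely requires working with viscosity sub/superdifferentials of $J_\psi$ at $(\tau,y)$ rather than classical gradients. The other items are comparatively routine once item (1) is in hand.
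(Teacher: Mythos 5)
Your item (1) is correct and, modulo presentation, is the paper's argument: the paper writes down the equality $J_\psi(t,\gamma(t))=\psi(y)-\int_t^\tau K\,ds$ directly (which follows from the $\pi$-a.e.\ equality $\psi(y)-J_\psi(0,x)=c(x,y)$ of Proposition~\ref{prop:HJB} plus \eqref{eqn:JpsiMono}, exactly your chain-of-inequalities-collapses argument), and takes $t=\tau$. Your item (2) also matches the paper: both simply invoke the Pontryagin maximum principle for a free-end-time problem to produce $p$, \eqref{eqn:costate}, \eqref{eqn:maximum_principle}, and the transversality condition \eqref{eqn:transversality}. (Your ``alternative'' derivation of \eqref{eqn:transversality} from the second branch of \eqref{eqn:HJB} is circular --- it needs $p(\tau)=\nabla J_\psi(\tau,y)$ and $\partial_t J_\psi(\tau,y)=0$, neither of which is available yet --- but it is offered only as an aside, and the primary route via the free-end-time transversality condition is correct.)

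The genuine gap is in item (3). Your ``envelope/uniqueness argument'' claims that because both $p(t)$ and $\nabla J_\psi(t,\gamma(t))$ make $A(t)$ a maximizer of $A\mapsto p\cdot f(\gamma(t),A)-K(t,\gamma(t),A)$ (with, you assert, the same Hamiltonian value), they must coincide. This does not follow: the map $p\mapsto \mathrm{argmax}_A\{p\cdot f-K\}$ is far from injective --- for instance when $\mathbb{A}$ is finite the maximizer is locally constant in $p$ --- and equality of Hamiltonian values at a shared maximizer $A^*$ only forces $(p_1-p_2)\perp f(q,A^*)$, a codimension-one condition, not $p_1=p_2$. (Nor is there a reason the two Hamiltonian values agree for $t<\tau$.) Your argument for $p(\tau)=\nabla\psi(y)$ inherits the same problem: you pass through $\nabla J_\psi(\tau,y)$ (which additionally requires differentiability of $J_\psi(\tau,\cdot)$ at $y$, a hypothesis the theorem does not impose) and then ``match $p$'' by the same flawed step. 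The paper instead observes that, as a consequence of item (1), $(\gamma,\tau,A)$ minimizes
\begin{equation*}
J_\psi\big(t,\gamma(t)\big)-\psi\big(\gamma(\tau)\big)+\int_t^\tau K\big(s,\gamma(s),A(s)\big)\,ds
\end{equation*}
over trajectories with \emph{free} positions at both endpoints $t$ and $\tau$, and applies the Pontryagin transversality conditions for free-endpoint problems with endpoint costs: differentiability of $J_\psi(t,\cdot)$ at $\gamma(t)$ gives $p(t)=\nabla J_\psi(t,\gamma(t))$, and differentiability of $\psi$ at $y$ gives $p(\tau)=\nabla\psi(y)$, directly and separately. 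You should replace the envelope argument by this free-endpoint transversality argument.
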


	\begin{proof}
		Suppose $\gamma^{x,y}$, $A^{x,y}$, and $\tau^{x,y}$ attain the infimum in the expression of $c(x,y)$. Then, for $\pi$-almost every $(x,y)$ and $t\leq \tau^{x,y}$,
		$$
			J_\psi\big(t,\gamma^{x,y}(t)\big) = \psi(y)-\int_t^{\tau^{x,y}} K\big(s,\gamma^{x,y}(s),A^{x,y}(s)\big)ds,
		$$
		from which follows that $J_\psi(\tau^{x,y},y)=\psi(y)$.

		The Pontryagin maximum principle \cite{P} implies that if $\gamma$, $\tau$, $A$ minimize the free end time optimal control problem then there is $p$ that satisfies (\ref{eqn:costate}), (\ref{eqn:maximum_principle}) and (\ref{eqn:transversality}).  For almost every $(x,y)$  in the support of $\pi$, then $\gamma$, $\tau$ and $A$ also minimize
		$$
			J_\psi\big(t,\gamma(t)\big)-\psi\big(\gamma(\tau)\big)+\int_t^\tau K\big(s,\gamma(s),A(s)\big)ds 
		$$
		amongst trajectories with free position at times $t$ and $\tau$.  It then also follows from the Pontryagin maximum principle and differentiability of $\psi(y)$ and $J_\psi(t,\gamma(t))$ that $p(\tau)=\nabla\psi(y)$ and $p(t)=\nabla J(t,\gamma(t))$.
	\end{proof}

	\begin{remark}
		Since the optimal control is generally not attained at  a measurable map, we could compactify the problem by using the notion of Young measures, which are weak* measurable maps $t\mapsto \alpha_t\in \mathcal{P}(\mathbb{A})$ (probability measures). As an example consider the case where $\mathbb{A}$ is a finite set, and the optimal direction does not align with $f(q,A)$ for any of these points.  A nearly optimal control will rapidly oscillating between points in $\mathbb{A}$ and the optimal control is obtained by a Young measure in $\mathcal{P}(\mathbb{A})$, which may be interpreted as a randomized choice of control at each instant. 
		In this case, we 
		consider controlled trajectories of the form
		$$
			\gamma(t) = \int_0^t \int_\mathbb{A} g\big(\gamma(s),A\big)\alpha_s(dA)ds + x.
		$$
		This formulation provides an intermediary step between the primal problem and its Eulerian formulation.  However, we choose instead to work with the weak Eulerian formulation of Section {\normalfont\ref{sec:Eulerian}}, similar to what is done in {\normalfont\cite{evans2001effective}}, and we later work with the stronger assumptions that the cost is given by a convex Lagrangian.
	\end{remark}
	We now introduce hypotheses that will imply attainment of the Eulerian formulation.

	\begin{enumerate}[label=\textbf{H\arabic*}] \setcounter{enumi}{2}
		\item \label{itm:ACoercive}
			%We assume that 
			The set of controls $\mathbb{A}\subset \R^m$ is closed, $f(q,A)$ is uniformly bounded in $q$ for each $A$, $|f(q,A)|\leq \Gamma_0(1+|A|)$, and $K(t,q,A)$ satisfies
			$$
				\liminf_{A\rightarrow \infty} \frac{K(t,q,A)}{|A|} = \infty.
			$$

		\item \label{itm:tCoercive} The following coercivity in time holds:
			%We assume that 
			$$
				\liminf_{T\rightarrow \infty}\int_0^T \inf_{q,A} K(t,q,A)dt =\infty.
			$$
	\end{enumerate} 
	We will abbreviate {\normalfont\ref{itm:continuity}} - {\normalfont\ref{itm:tCoercive}} to include hypotheses {\normalfont\ref{itm:continuity}}, {\normalfont\ref{itm:controllable}}, {\normalfont\ref{itm:compact}}, {\normalfont\ref{itm:ACoercive}} and {\normalfont\ref{itm:tCoercive}}.

	\begin{theorem}\label{thm:Eulerian_verification}
		Suppose {\normalfont\ref{itm:continuity}} - {\normalfont\ref{itm:tCoercive}} hold, then the Eulerian formulation is minimized by some pair $(\rho,\tilde{\rho})$.
		For optimal $(\psi,J_\psi)$ (cf.\ Proposition {\normalfont\ref{prop:MKattainment}} and Proposition {\normalfont\ref{prop:dual_equivalence}}):
		\begin{enumerate}
			\item \label{itm:time_slackness}
			$J_\psi(t,q)=\psi(q)$ for $\tilde{\rho}$ almost every $(t,q)$;
			\item If $J_\psi\in C^1(\R^+\times \R^n)$, then for almost every $t$, and $\rho(t,\cdot,\cdot)$ almost every $(q,A)$, $f(q,A)\in D_pH(t,q,\nabla J_\psi(t,q))$, where $D_pH$ denotes the subdifferential of $H$ with respect to $p$.
		\end{enumerate}  
	\end{theorem}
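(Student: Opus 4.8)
The plan is the direct method of the calculus of variations. Since $c$ is finite and continuous (Lemma~\ref{lem:lsc}) and $\mu,\nu$ are compactly supported, $W(\mu,\nu)\le V(\mu,\nu)<\infty$ by Proposition~\ref{prop:Eulerian_embedding}, so a minimizing sequence $(\rho_k,\tilde\rho_k)$ exists; I would regard each as a finite measure on $\R^+\times\R^n\times\mathbb{A}$, resp.\ $\R^+\times\R^n$, via $\rho_k(dt,dq,dA)=\rho_k(t,dq,dA)\,dt$. Testing \eqref{eqn:continuity} with $w\equiv1$ gives that $\tilde\rho_k$ has total mass $\mu(\R^n)=1$; testing with cut-offs in $t$ gives that $t\mapsto(\int_\mathbb{A}d\rho_k)(t)=1-\tilde\rho_k([0,t]\times\R^n)$ is nonincreasing, so the $\rho_k$-mass at each time $t$ is at most $1$. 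If the $\tilde\rho_k$-mass on $[T,\infty)$ stayed bounded below by $\epsilon>0$, then $(\int_\mathbb{A}d\rho_k)(t)\ge\epsilon$ for all $t<T$, forcing $\int K\,d\rho_k\ge\epsilon\int_0^T\inf_{q,A}K(t,q,A)\,dt$, which by \ref{itm:tCoercive} contradicts the boundedness of the cost; hence the sequence is tight in time. The superlinear growth of $K$ in $A$ in \ref{itm:ACoercive}, applied on the (time-compact) relevant region, bounds $\int|A|\,d\rho_k$ and gives tightness in $A$; combined with $|f(q,A)|\le\Gamma_0(1+|A|)$ and the compact support of $\mu$, propagating a first-moment estimate through \eqref{eqn:continuity} gives tightness in $q$. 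Passing to a weak* limit $(\rho,\tilde\rho)$ of finite measures, admissibility survives because \eqref{eqn:continuity} and \eqref{eqn:Eulerian_target} are linear with continuous coefficients and tightness lets the test class be extended to $w\in C^1(\R\times\R^n)$ and $u\in C(\R^n)$, while $\rho\mapsto\int K\,d\rho$ is weak* lower semicontinuous since $K\ge0$ is continuous. Finally I would check that the time marginal of $\rho$ is Lebesgue-absolutely continuous (a singular-in-time atom of $\int_\mathbb{A}d\rho$ is incompatible with \eqref{eqn:continuity} or may be deleted without increasing the cost), so $\rho$ disintegrates back into a density process and $(\rho,\tilde\rho)$ is an admissible minimizer.

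\textbf{Complementary slackness.} Let $(\rho^*,\tilde\rho^*)$ be optimal and let $\psi^*$ be an optimal dual function with value function $J_{\psi^*}$, which by Proposition~\ref{prop:dual_equivalence} is the (Lipschitz) viscosity solution of \eqref{eqn:HJB}, in particular a supersolution satisfying \eqref{eqn:dual_constraints}. Inserting $u=\psi^*$ into \eqref{eqn:Eulerian_target} and $w=J_{\psi^*}$ into \eqref{eqn:continuity} (after a routine mollification of $J_{\psi^*}$, under which the slack in \eqref{eqn:dual_constraints} degrades only by a modulus tending to $0$ on the relevant compact set), subtracting, and using $\int K\,d\rho^*=W(\mu,\nu)=D(\mu,\nu)=\int\psi^*\,d\nu-\int J_{\psi^*}(0,\cdot)\,d\mu$ from Theorem~\ref{thm:Eulerian_equality} and Proposition~\ref{prop:HJB}, one is left (suppressing arguments) with
\begin{align*}
 \int\bigl(J_{\psi^*}-\psi^*\bigr)\,d\tilde\rho^* + \int\bigl(K-\tfrac{\partial}{\partial t}J_{\psi^*}-f\cdot\nabla J_{\psi^*}\bigr)\,d\rho^* = 0.
\end{align*}
Both integrands are nonnegative (the first by $\psi^*\le J_{\psi^*}$, the second by \eqref{eqn:dual_constraints}) and both measures are nonnegative, so each integral vanishes. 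The first vanishing is precisely part~(1), that $J_{\psi^*}(t,q)=\psi^*(q)$ for $\tilde\rho^*$-a.e.\ $(t,q)$; the second yields $f(q,A)\cdot\nabla J_{\psi^*}(t,q)-K(t,q,A)=\tfrac{\partial}{\partial t}J_{\psi^*}(t,q)$ for $\rho^*$-a.e.\ $(t,q,A)$.

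\textbf{Hamiltonian maximization and the subdifferential.} For part~(2) I now assume $J_\psi\in C^1$, so the last identity holds genuinely $\rho^*$-a.e. Since $H(t,q,p)=\sup_{A'}\bigl(p\cdot f(q,A')-K(t,q,A')\bigr)\ge f(q,A)\cdot\nabla J_{\psi^*}(t,q)-K(t,q,A)$ while the supersolution property gives $\tfrac{\partial}{\partial t}J_{\psi^*}+H(t,q,\nabla J_{\psi^*})\le0$, the identity forces $H(t,q,\nabla J_{\psi^*}(t,q))=f(q,A)\cdot\nabla J_{\psi^*}(t,q)-K(t,q,A)$, i.e.\ $A$ attains the supremum defining $H$ at $p=\nabla J_{\psi^*}(t,q)$. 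Because $p\mapsto H(t,q,p)$ is convex (a supremum of affine functions), the bound $H(t,q,p')\ge p'\cdot f(q,A)-K(t,q,A)=H(t,q,p)+(p'-p)\cdot f(q,A)$ is exactly the statement $f(q,A)\in D_pH(t,q,p)$. Disintegrating $\rho^*$ in $t$ then gives part~(2): for a.e.\ $t$ and $\rho^*(t,\cdot,\cdot)$-a.e.\ $(q,A)$, $f(q,A)\in D_pH(t,q,\nabla J_{\psi^*}(t,q))$.

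\textbf{Main obstacle.} The heart of the matter is the existence half: securing the tightness estimates despite the noncompact control set $\mathbb{A}$ (this is where \ref{itm:ACoercive} and \ref{itm:tCoercive} are essential) and verifying that the time marginal of the weak* limit is absolutely continuous so that the limit is genuinely a density process. A secondary but real difficulty is that the optimal dual object $J_{\psi^*}$ is only Lipschitz, so using it as a test function $w$ in the weak continuity equation \eqref{eqn:continuity} for part~(1) must be justified by mollification and a limiting argument; part~(2) sidesteps this by hypothesis, and then also makes the pointwise gradient statement meaningful.
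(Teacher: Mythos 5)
Your proposal is correct and follows essentially the same route as the paper: a tightness/direct-method argument for existence (using \ref{itm:tCoercive} in time, \ref{itm:ACoercive} in the control variable, and the continuity equation for the spatial marginal), followed by complementary slackness obtained by inserting $\psi$ and a smooth approximation of $J_\psi$ into the weak formulations and using the no-duality-gap equality, and finally the $C^1$ hypothesis to read off the subdifferential inclusion from the Hamiltonian identity. One small credit to you: you explicitly flag that the weak* limit's time marginal must be shown Lebesgue-absolutely continuous so the limit is genuinely a density process, a point the paper passes over silently; your sketch of why is informal, but the awareness is a plus.
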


	\begin{proof}
		The attainment of the minimum at $(\rho,\tilde{\rho})$ is by a compactness argument using coercivity and tightness.  We fix a cost $M\in \R$ and show that the set of $(\rho,\tilde{\rho})$ with cost less than $M$ is tight. %We now 
		First, let $\epsilon>0$ be arbitrary. Given assumption \ref{itm:tCoercive} there is $T$ such that
		$$
			\int_0^{T} \inf_{q,A} K(t,q,A)dt > M/\epsilon.
		$$
		If $\tilde{\rho}([T,\infty)\times \R^n) \geq \epsilon$ then $\rho(T,\R^n,\mathbb{A})\geq \epsilon$, because
		$$
			\int_0^T\int_{\R^n}\tilde{\rho}(dq,dt)+\int_{\mathbb{A}}\int_{\R^n}\rho(T,dq,dA)=1,
		$$
		which implies that the cost of $(\rho,\tilde{\rho})$ is greater than $M$.
		Similarly, by \ref{itm:ACoercive} there is $R_1$ such that $K(t,q,A)>|A|M(1+\Gamma_0)/\epsilon$ if $|A|>R_1$, which implies
		$$
			\int_{\R^+} \int_{|A|>R_1}\int_{\R^n}|A|(1+\Gamma_0)\rho(t,dq,dA)dt< \epsilon.
		$$
		We let $\Gamma_1=\sup_{q\in \R^n,|A|<R_1}|f(q,A)|$.  The amount of mass at time $t$ outside a ball of radius $R_2(t)=2(t\Gamma_1+D+1)$ is less than $\epsilon$, where the support of $\mu$ is contained in a ball of radius $D$. This can be shown from \eqref{eqn:continuity} by considering a test function, $w(t,q)$, that is zero on a ball of radius $R_2(t)/2$, one outside of a ball of radius $R_2(t)$, nonegative, and satisfies 
		$$
			\frac{\partial}{\partial t}w(t,q)\leq -|\nabla w(t,q)| \Gamma_1,
		$$
		and $|\nabla w(t,q)|\leq 1$ for $t\leq T$ (consider a smooth approximation to the piecewise interpolation from the function that is $0$ for $|q|\leq R_2(t)/2$ and $1$ for $|q|\geq R_2(t)$).
		Then \eqref{eqn:continuity} implies
		\begin{align}
			&\ \int_{|q|>R_2(T)}\int_{0}^T\tilde{\rho}(dt,dq)+\int_{\mathbb{A}}\int_{|q|>R_2(T)}\rho(T,dq,dA)\nn\\
			\leq&\ \int_0^T\int_{\mathbb{A}}\int_{\R^n}\Big[\frac{\partial}{\partial t}w(t,q)-f(q,a)\cdot \nabla w(t,q)\Big]\rho(t,dq,dA)dt\nn\\
			\leq&\ \int_0^T\int_{|A|>R_1}\int_{\R^n}\Big[|\nabla w|(1+\Gamma_0)|A|\Big]\rho(t,dq,dA)dt< \epsilon.\nn
		\end{align}

		% To show that $\int_{\R^+}\int_{|q|>R_2(t)}\tilde{\rho}(dq,dt)<\epsilon$, we now let $w$ be zero for $|q|\leq R_2(t)$ and otherwise $w(q)=|q-R_2|$. Then \eqref{eqn:continuity} implies that
		% $$
		% 	\int_{\R^n}\int_{0}^tw(q)\tilde{\rho}(ds,dq)\leq \frac{\epsilon}{2}+\int_{|q|>R_2}\int_{|A|<R_1}\Gamma(R_1)\rho(t,dq,dA)\leq 0.
		% $$
		These estimates show that the mass outside of $[0,T]\times B_{R_1}\times B_{R_2(T)}$ is less than $\epsilon$. This tightness ensures that a subsequence of 
		a minimizing
		sequence of $(\rho^i,\tilde{\rho}^i)$ converges in the weak* topology to some $(\rho,\tilde{\rho})$ that attains the minimum cost.

		We now let $\psi$ be optimal and consider a sequence $J^i\in C^1(\R^+\times \R^n)$ that satisfy \eqref{eqn:dual_constraints} and converge uniformly to $J_\psi$.  Then using $J^i$ and $\psi$ as test functions for \eqref{eqn:continuity} and \eqref{eqn:Eulerian_target} with optimal $(\rho,\tilde{\rho})$, we have that
		\begin{align}
			&\ \int_{\R^n}\psi(y)\nu(dy)-\int_{\R^n}J^i(0,x)\mu(dx)\nn\\
			\leq &\ \int_{\R^n}\int_{\R^+}\big[\psi(q)-J^i(t,q)\big]\tilde{\rho}(dt,dq) + \int_{\R^+}\int_\mathbb{A} \int_{\R^n} K(t,q,A)\rho(t,dq,dA)dt.\nn
		\end{align}
		In the limit as $i\rightarrow 0$ this shows that $0\leq \int_{\R^n}\int_{\R^+}[\psi(q)-J_\psi(t,q)]\tilde{\rho}(dt,dq)$, which along with $\psi(q)\leq J_\psi(t,q)$ proves statement \textit{(\ref{itm:time_slackness})}.

		If $J_\psi$ is in fact $C^1$, then using $J_\psi$ as a test function for \eqref{eqn:continuity} as above also implies that
		$$
			\frac{\partial}{\partial t}J_\psi(t,q)+f(q,A)\cdot \nabla J_\psi(t,q)-K(t,q,A)=0
		$$
		for almost every $t$ and $\rho$ almost every $q$ and $A$. It follows that at these points $A$ maximizes $f(q,A)\cdot \nabla J_\psi(t,q)-K(t,q,A)$, which occurs at $f(q,A)\in D_pH(t,q,\nabla J_\psi(t,q))$.  
	\end{proof}

	\begin{remark}\label{rem:Wasserstein}
		While $(\rho,\tilde{\rho})$ are convenient for the existence and weak optimality criteria, we can now move to a more familiar Eulerian perspective with the density defined informally by
		$$
			\eta(t,q) = \int_{\mathbb{A}}\rho(t,q,dA).
		$$
		The statement $f\in D_pH$ that we have proven in Theorem {\normalfont\ref{thm:Eulerian_verification}} can now be reinterpreted as $\eta$ satisfies a weak continuity inequality, formally equivalent to
		$$
			\frac{\partial}{\partial t} \eta(t,q) \leq -{\rm div} \Big(D_pH\big(t,q,\nabla J_\psi(t,q)\big)\eta(t,q)\Big),
		$$
		where we have replaced the stopping distribution $\tilde{\rho}$ with an inequality.
	\end{remark}

\section{When optimal end times are hitting times of a barrier}\label{sec:hitting_times}

	In this section, we study the structure of the solutions of the dual problem under different cases of time dependence as well as the implications for the optimal flows. In particular, our analysis will  focus on the viscosity solutions to (\ref{eqn:HJB}), and how their properties translate to the primal problem.  We consider three cases:
	\begin{description}
		\item[\hypertarget{itm:TS}{TS}] The time-stationary case: i.e., when $K$ is independent of time.
		\item[\hypertarget{itm:TC}{TC}] The time-compounded case: i.e., when $K$ is strictly increasing in time.
		\item[\hypertarget{itm:TD}{TD}] The time-discounted case: i.e., when $K$ is strictly decreasing in time.
	\end{description}
	
	\begin{proposition}\label{prop:dualStructure}
		We suppose {\normalfont \ref{itm:continuity}} and {\normalfont \ref{itm:controllable}}, and that the pair $(J_\psi,\psi)$ solve {\normalfont (\ref{eqn:HJB})} and $\psi(q)=\inf_{t\in \R^+}\{J_\psi(t,q)\}$. We consider the three cases of time dependence.  We show the following inequalities hold for certain values of $t$ in each case:
		\begin{align} \label{eqn:phiSubsolution}	
			-H\big(t,q,\nabla J_\psi(t,q)\big)\leq 0
		\end{align}
		and
		\begin{align} \label{eqn:psiSupersolution} 
			-H\big(t,q,\nabla \psi(q)\big)\geq 0.
		\end{align}

		\begin{enumerate}
			\item In the time-stationary case \hyperlink{itm:TS}{\rm \bf [TS]}, the dual solution $J_\psi$ is constant in time, $J_\psi(t,q)=\psi(q)$, and  {\normalfont (\ref{eqn:psiSupersolution})} holds for all $q$.
			\item In the time-compounded case \hyperlink{itm:TC}{\rm \bf [TC]}, the solution $J_\psi$ is non-increasing, the free boundary $s(q)=\inf\{t;J_\psi(t,q)=\psi(q)\}$ is lower semi-continuous, and %$J_\psi(t,q)=\sup\{J_\psi(s,q); s\geq t\}$. 
			Equation {\normalfont(\ref{eqn:psiSupersolution})} holds for $t\geq s(q)$ and {\normalfont(\ref{eqn:phiSubsolution})} holds for $t< s(q)$.
			%, where
			% $$s(q)=\inf\{t;J_\psi(t,q)=\psi(q)\}.$$ 
			\item In the time-discounted case  \hyperlink{itm:TD}{\rm \bf [TD]}, the solution $J_\psi$ is non-decreasing, the free boundary $s(q)=\sup\{t;J_\psi(t,q)=\psi(q)\}$ is upper semi-continuous, and %$\phi(t,q)=\inf\{J_\psi(s,q); s\geq t\}$. 
			Equation {\normalfont(\ref{eqn:psiSupersolution})} holds for $t\leq s(q)$ and {\normalfont(\ref{eqn:phiSubsolution})} holds for $t> s(q)$.
			%, where 
		%	$$s(q)=\sup\{t;J_\psi(t,q)=\psi(q)\}.$$ 
			
		\end{enumerate}

		Moreover, in both cases \hyperlink{itm:TC}{\rm \bf [TC]} and \hyperlink{itm:TD}{\rm \bf [TD]} it also holds that if $0<s(q)<\infty$ then
		\begin{align} \label{eqn:psiSolution}
			-H\big(s(q),q,\nabla \psi(q)\big)=0.
		\end{align}
		
	\end{proposition}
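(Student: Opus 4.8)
The plan is to prove the four assertions in turn: the time monotonicity of $J_\psi$, the one-sided inequalities \eqref{eqn:phiSubsolution}--\eqref{eqn:psiSupersolution}, the semicontinuity of $s$, and finally the free-boundary equality \eqref{eqn:psiSolution}, which is the crux. For the monotonicity I would argue directly from the control representation \eqref{eqn:dynamicProgramming}, using that $f$ is time-independent throughout this section. Given $t_1<t_2$, a near-optimal policy $(\tau,A(\cdot))$ with trajectory $\gamma$ has a time-translate that is again admissible---the ODE does not see $t$---and whose cost is the same integral of $K$, merely evaluated along shifted times. In the stationary case \textbf{[TS]} both translations are cost-neutral, so $t\mapsto J_\psi(t,q)$ is constant and thus equals $\inf_t J_\psi(t,q)=\psi(q)$; the supersolution part of \eqref{eqn:HJB} then reads $-H(t,q,\nabla\psi(q))\ge0$, i.e.\ \eqref{eqn:psiSupersolution}, for every $q$. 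In \textbf{[TC]} ($K$ increasing) translating a near-optimal policy for $J_\psi(t_2,q)$ back to start at $t_1$ only lowers $\int K$, so $J_\psi(t_1,q)\ge J_\psi(t_2,q)$: $J_\psi$ is non-increasing in $t$; the symmetric argument gives non-decreasing in \textbf{[TD]}.

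Since $J_\psi$ is lower semicontinuous, monotone in $t$, and $\inf_t J_\psi=\psi$, the coincidence set $\{t:J_\psi(t,q)=\psi(q)\}$ is a closed half-line, $[s(q),\infty)$ in \textbf{[TC]} and $[0,s(q)]$ in \textbf{[TD]}. On its complement (the continuation region) the obstacle in \eqref{eqn:HJB} is inactive, so $-\partial_t J_\psi-H(t,q,\nabla J_\psi)=0$ there in the viscosity sense; combining with the sign of $\partial_t J_\psi$ from the previous step yields the asserted bound on $-H(t,q,\nabla J_\psi)$ on that region. On the coincidence set I would instead test the supersolution property of $J_\psi$ for $-\partial_t u-H$ against a $t$-independent smooth function $\Phi(t,q)=\phi(q)$, where $\phi\le\psi$ touches $\psi$ from below at the relevant $q$: since $\Phi\le\psi\le J_\psi$ with equality wherever $J_\psi=\psi$, this produces $-H(t,q,\nabla\phi(q))\ge0$, i.e.\ \eqref{eqn:psiSupersolution} in the viscosity sense, and pointwise wherever $\psi$ is differentiable. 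For the semicontinuity of $s$: in \textbf{[TC]}, if $q_n\to q$ with $\liminf_n s(q_n)=\ell<s(q)$, pick $t\in(\ell,s(q))$; passing to a subsequence with $J_\psi(t,q_n)=\psi(q_n)$, lower semicontinuity of $J_\psi$ together with continuity of $\psi$ give $J_\psi(t,q)\le\psi(q)$, contradicting $t<s(q)$; thus $s$ is lower semicontinuous, and the mirror argument yields upper semicontinuity in \textbf{[TD]}.

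For \eqref{eqn:psiSolution} at a point $q$ with $0<s(q)<\infty$, one inequality---$-H(s(q),q,\nabla\psi(q))\ge0$---is \eqref{eqn:psiSupersolution} applied at $t=s(q)$, which lies in the coincidence half-line in both cases. For the reverse inequality I would approach the free boundary through the continuation region---from below in \textbf{[TC]}, from above in \textbf{[TD]}---where the Hamilton--Jacobi equation gives $-H(t,q,\nabla_q J_\psi(t,q))=\partial_t J_\psi(t,q)$. Since $J_\psi(s(q),\cdot)\ge\psi(\cdot)$ with equality at $q$, the spatial slope of $J_\psi$ at $(s(q),q)$ matches $\nabla\psi(q)$; letting $t\to s(q)$ and using continuity of $H$ (guaranteed by the continuity hypotheses on $K$ and $f$) together with the one-sided sign of $\partial_t J_\psi$, one is forced to conclude that the one-sided $t$-derivative of $J_\psi(\cdot,q)$ at $s(q)$ vanishes, which is precisely $-H(s(q),q,\nabla\psi(q))=0$.

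The delicate step is this last limit: as $J_\psi$ is in general only locally Lipschitz, the ``convergence of gradients $\nabla_q J_\psi(t,q)\to\nabla\psi(q)$'' cannot be read off classically and must be set up in the viscosity framework. The cleanest way to organize it is to prove directly that $q\mapsto\psi(q)$ is a viscosity \emph{solution} (both sub- and supersolution) of $-H(s(q),q,\nabla\psi)=0$ at such $q$: the supersolution side is already in hand from the second step, and for the subsolution side one supposes a smooth $\phi\ge\psi$ touching $\psi$ at $q$ with $-H(s(q),q,\nabla\phi(q))>0$ and derives a contradiction by using $\Phi_\lambda(t,x)=\phi(x)+\lambda(t-s(q))$, for a suitably small signed $\lambda$, as a strict supersolution of the Hamilton--Jacobi equation that sits above $J_\psi$ on the continuation side of $(s(q),q)$ near that point, in conflict with the subsolution property of $J_\psi$ there---equivalently, with the characterization of $J_\psi$ as the pointwise smallest supersolution. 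Keeping track of the endpoint constraint $\tau\ge t$ in \eqref{eqn:dynamicProgramming} and using $0<s(q)$ to ensure there is genuine room on the continuation side (the values $s(q)\in\{0,\infty\}$ being excluded in the statement) is where the technical care concentrates, and I expect this free-boundary subsolution argument to be the main obstacle.
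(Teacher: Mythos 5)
Your monotonicity argument takes a genuinely different route from the paper. The paper defines $\tilde J(t,q)=\sup\{J_\psi(s,q); s\ge t\}$, shows $\tilde J$ is a viscosity subsolution of \eqref{eqn:HJB} via a time-shifted test-function argument, and then invokes the comparison principle to conclude $\tilde J=J_\psi$. You instead argue directly from the control representation \eqref{eqn:dynamicProgramming}: translating a near-optimal policy from start time $t_2$ to $t_1<t_2$ is admissible because $f$ is time-independent, and under \hyperlink{itm:TC}{\rm \bf [TC]} this lowers the running cost. This is a cleaner, more elementary proof of the monotonicity and is certainly correct; the paper's PDE-level argument is heavier but extracts the same fact. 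Your semicontinuity argument is also more explicit than the paper's one-line remark, and your derivation of \eqref{eqn:phiSubsolution} and \eqref{eqn:psiSupersolution} (testing the supersolution property of $J_\psi$ with a $t$-independent test function $\phi\le\psi$ on the coincidence set; using the sign of $\partial_t J_\psi$ plus the PDE on the continuation region) matches the paper's reasoning in substance.

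There is, however, a concrete gap in the argument for \eqref{eqn:psiSolution}. The comparison function $\Phi_\lambda(t,x)=\phi(x)+\lambda(t-s(q))$ with ``suitably small signed $\lambda$'' does not do the job. If you intend to invoke the characterization of $J_\psi$ as the pointwise smallest supersolution of \eqref{eqn:dual_constraints}, you need $\Phi_\lambda\ge\psi$ everywhere; since $\phi(q)=\psi(q)$, that forces $\lambda(t-s(q))\ge 0$ for all $t$, which no nonzero linear function satisfies. If you instead intend to touch $J_\psi$ from above on the continuation side and use the subsolution property, ``small'' $\lambda$ is exactly wrong: near $(s(q),q)$, in case \hyperlink{itm:TC}{\rm \bf [TC]}, $J_\psi(t,q)-\psi(q)$ is of order $L\,(s(q)-t)$ for a local Lipschitz constant $L$, so $|\lambda|$ must exceed $L$ for $\Phi_\lambda$ to dominate $J_\psi$, and even then a one-sided touch at the free boundary is not a standard viscosity test point for the subsolution property. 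The paper's construction fixes both problems: it takes $G(t,x)=g(x)+f(t)$ with $f\ge 0$, $f(t)=0$ for $t>s(q)-\delta$, and $f'(t)$ sufficiently negative for $t<s(q)-\delta$, so that $G\ge\psi$ globally, $G$ is a supersolution of \eqref{eqn:dual_constraints} on the relevant region (using continuity of $H$ and the contradiction hypothesis near $(s(q),q)$), and $G(t,q)=\psi(q)$ already at some $t$ strictly inside $(s(q)-\delta,s(q))$, which forces $J_\psi(t,q)\le\psi(q)$ there and contradicts $t<s(q)$. Your high-level plan is sound and you correctly flag this as the delicate step, but as written the test function and the sign/size of $\lambda$ need to be replaced by the nonnegative, flat-near-$s(q)$ cutoff $f$.
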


	\begin{proof}
	
		The stationary case \hyperlink{itm:TS}{\rm \bf [TS]} follows immediately from the uniqueness, given $\psi$, of viscosity solutions to (\ref{eqn:HJB}) and the symmetry as $t\mapsto t+\Delta t$.  A consequence of (\ref{eqn:HJB}) is that 
		$$
			-{H}\big(t,q,\nabla J_\psi(t,q)\big)\geq 0
		$$ 
		and since $J_\psi=\psi$, equation (\ref{eqn:psiSupersolution}) holds.

		For the time-compounded case \hyperlink{itm:TC}{\rm \bf [TC]}, we let 
		$\tilde{J}(t,q)=\sup\{J_\psi(s,q); s\geq t\}$, which clearly satisfies $\tilde{J}\geq J_\psi$ and $\tilde{J}$ is non-increasing.  To prove $\tilde{J}\leq J_\psi$, we show that $\tilde{J}$ is a subsolution to (\ref{eqn:HJB}), then use the comparison principle.
		The subsolution property follows from considering two cases: If $J_\psi(r,q)=\psi(q)$ for all $r\geq t$, then $\tilde{J}(t,q)=J_\psi(t,q)=\psi(q)$ and it is obviously a subsolution; Otherwise the supremum in the definition of $\tilde{J}$ is attained at a time $r^*\geq t$ where $J_\psi(r^*,q)>\psi(q)$. We then consider a test function $w$ that touches $\tilde{J}$ from above at $(t,q)$, and  let $\Delta t = r^*-t$ and $\hat{w}(r,q)=w(r+\Delta t,q)$.  Then $\hat{w}$ touches $J_\psi$ from above at $(r^*,q)$.  It follows from the definition of a viscosity subsolution that
		\begin{align}
			0\geq &\ -\frac{\partial}{\partial s}\hat{w}(r^*,q)-{H}\big(r^*,q,\nabla \hat{w}(r^*,q)\big)\nn\\
			\geq&\ -\frac{\partial}{\partial t}w(t,q)-{H}\big(t,q,\nabla w(t,q)\big)\nn,
		\end{align}
		where we have used Assumption \hyperlink{itm:TC}{\rm \bf [TC]} that $K$ is increasing and therefore the Hamiltonian is decreasing in time. This proves that $\tilde{J}$ is a viscosity subsolution to (\ref{eqn:HJB}).  We conclude that $\tilde{J}=J_\psi$, and that $J_\psi$ is non-increasing, which also proves (\ref{eqn:phiSubsolution}) when $J_\psi(t,q)>\psi(q)$.  Equation (\ref{eqn:psiSupersolution}) follows immediately from (\ref{eqn:HJB}) in the case that $J_\psi=\psi$ and hence $\psi$ touches $J_\psi$ from below thus $-H(t,q,\nabla\psi(q))\geq -H(t,q,\nabla J_\psi(t,q))\geq 0$, where the last inequality follows from that $J_\psi$ is constant in time for $t\geq s(q)$ but still satisfies \eqref{eqn:HJB}.  

		The case of \hyperlink{itm:TD}{\rm \bf [TD]} follows exactly the same argument replacing the supremums by infimums, subsolutions by supersolutions, and touching above by touching below. 

		That $s$ is lower (upper) semi-continuous follows from being the inf (sup) over a closed set.  
		For the final equation (\ref{eqn:psiSolution}), that $-H(s(q),q,\nabla \psi(q))\geq 0$ follows from (\ref{eqn:psiSupersolution}) with $t=s(q)$. For the other inequality, suppose to the contrary that there is a smooth test function $g$ with $g\geq \psi$, $g(\overline{q})=\psi(\overline{q})$, and $-H(s(\overline{q}),\overline{q},\nabla g(\overline{q}))>0$.  Then we can construct a function $G(t,q)=g(q)+f(t)$, which satisfies $G\geq \psi$,
		$$
			-\frac{\partial}{\partial t} G(t,q)-H\big(t,q,\nabla G(t,q)\big)\geq 0
		$$
		and $G(t,\overline{q})=\psi(\overline{q})$ for some $0\leq t<s(\overline{q})$ (or $t>s(\overline{q})$ in the case of \hyperlink{itm:TD}{\rm \bf [TD]}). In particular, for the case of \hyperlink{itm:TC}{\rm \bf [TC]}, we choose some $\delta>0$ and $f(t)$ that is zero for $t>s(q)-\delta$ and $f'(t)$ is sufficiently negative for $t<s(q)-\delta$ to ensure that $-f'(t)-H(t,q,g(q))>0$. Since $J_\psi$ is the infimum over all such supersolutions, we have that $J_\psi\leq G$, however, this contradicts the definition of $s$, which implies that $J_\psi(t,\overline{q})>\psi(\overline{q})$ for $t<s(\overline{q})$ (or $t>s(\overline{q})$ for \hyperlink{itm:TD}{\rm \bf [TD]}).
	\end{proof}

	This next lemma encodes the simple properties that will allow us to prove that the stopping plans are infact given by a single stopping time. First, we consider that stopping times are given by a function of the Hamiltonian trajectories $(\gamma,p)\mapsto \tau$.  In Theorem \ref{thm:Monge_map} this is used in the setting where the trajectories are uniquely determined by their initial condition, and the initial costate condition is determined by $\nabla J_\psi(0,\cdot)$.  The second setting we consider is where the trajectories are determined by their terminal conditions, in which case the terminal costate is given by $\nabla \psi(y)$ and the stopping time is the free boundary $s(y)$ defined in Proposition \ref{prop:dualStructure}.
	\begin{lemma}\label{lem:hitting_time}
		Given the same hypotheses as Theorem {\normalfont\ref{thm:Pontryagin}}, assuming either \hyperlink{itm:TC}{\rm \bf [TC]} or \hyperlink{itm:TD}{\rm \bf [TD]} :
		\begin{enumerate}
			\item Suppose that $\gamma(\cdot)$, $A(\cdot)$ are optimizers of $c(x,y)$ with $x\not=y$, and $p(\cdot)$ is a costate warranted by the Pontryagin maximum principle.  Then along the trajectory the Hamiltonian, $H(t,\gamma(t),p(t))$, strictly decreases in the case of \hyperlink{itm:TC}{\rm \bf [TC]} or strictly increases in the case of \hyperlink{itm:TD}{\rm \bf [TD]}.  In particular, for each Hamiltonian trajectory ($\gamma,p$) the end time $\tau$ is determined by the transversality condition \eqref{eqn:transversality}.
			\item Suppose that $\gamma$, $A$ are instead optimizers of $J(0,x)$, i.e.\ the free end time problem with terminal cost $-\psi(\cdot)$.  If $\gamma$ ends at $y$ where $\psi$ differentiable with end time $\tau>0$, then the end time is $\tau = s(y)$, i.e.\ the unique time such that $H(t,y,\nabla\psi(y))=0$.
		\end{enumerate}
	\end{lemma}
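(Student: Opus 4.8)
The plan is to obtain both statements from the Pontryagin maximum principle (Theorem~\ref{thm:Pontryagin}) together with the structure of the dual solution from Proposition~\ref{prop:dualStructure}. The unifying point is that, in both cases \hyperlink{itm:TC}{\rm \bf [TC]} and \hyperlink{itm:TD}{\rm \bf [TD]}, the Hamiltonian is strictly monotone in time — both at a frozen $(q,p)$ and along any trajectory generated by the maximum principle.

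First I would treat part (1). Fix an optimizer $\gamma,A$ of $c(x,y)$ with $x\neq y$ (so $\tau>0$) and a costate $p$ warranted by Theorem~\ref{thm:Pontryagin}, and put $h(t):=H\big(t,\gamma(t),p(t)\big)$. On any interval of continuity of $A(\cdot)$ the maps $\gamma,p$ are $C^1$ and \eqref{eqn:maximum_principle} holds pointwise; the envelope (Danskin) identities $\nabla_q H=p^\top\nabla_q f-\nabla_q K$ and $\nabla_p H=f$ evaluated at the maximizing control, together with $\dot\gamma=f(\gamma,A)$ and the costate equation \eqref{eqn:costate}, then make the $\dot\gamma$- and $\dot p$-contributions to $\frac{d}{dt}h$ cancel, leaving
\[
  \frac{d}{dt}h(t)=\partial_t H\big(t,\gamma(t),p(t)\big).
\]
Since $H(t,q,p)=\sup_{A\in\mathbb A}\{p\cdot f(q,A)-K(t,q,A)\}$ is non-increasing (resp.\ non-decreasing) in $t$ when $K$ is increasing (resp.\ decreasing), the right-hand side has the appropriate sign, so $h$ is monotone on $[0,\tau]$ (it is continuous, and $A(\cdot)$ has only finitely many jumps). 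To get \emph{strict} monotonicity I would use that the supremum in $H$ is attained along the trajectory, together with the strict ordering of $t\mapsto K(t,q,A)$: for $t_1<t_2$ and $A^*$ a maximizer in the definition of $H(t_2,q,p)$,
\[
  H(t_1,q,p)\ \ge\ p\cdot f(q,A^*)-K(t_1,q,A^*)\ >\ p\cdot f(q,A^*)-K(t_2,q,A^*)\ =\ H(t_2,q,p)
\]
in case \hyperlink{itm:TC}{\rm \bf [TC]} (and symmetrically in \hyperlink{itm:TD}{\rm \bf [TD]}); combining this with the flow identity gives strict monotonicity of $h$, which also persists along the Hamiltonian continuation for $t>\tau$. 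Hence $h$ is injective, so the transversality condition \eqref{eqn:transversality}, $h(\tau)=0$, singles out $\tau$ as the unique time along the Hamiltonian trajectory at which $H$ vanishes.

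Next I would treat part (2). Let $\gamma,A$ be optimal for the free-end-time problem with running cost $K$, free terminal state and terminal reward $\psi$ — its value at $(0,x)$ being $J(0,x)=J_\psi(0,x)$ — ending at $y=\gamma(\tau)$ with $\tau>0$ and $\psi$ differentiable at $y$. The dynamic programming principle and optimality force $J_\psi(\tau,y)=\psi(y)$ (exactly as in the proof of Theorem~\ref{thm:Pontryagin}(1)), so by the time-monotonicity of $J_\psi$ (Proposition~\ref{prop:dualStructure}) one has $s(y)\le\tau$ in case \hyperlink{itm:TC}{\rm \bf [TC]} and $s(y)\ge\tau$ in case \hyperlink{itm:TD}{\rm \bf [TD]}. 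The maximum principle for this free-endpoint problem furnishes a costate with $p(\tau)=\nabla\psi(y)$ (Theorem~\ref{thm:Pontryagin}(3), using differentiability of $\psi$ at $y$) and, the terminal time being free, the transversality condition $H\big(\tau,y,\nabla\psi(y)\big)=0$, the supremum here being attained by the terminal control value. By the strict monotonicity of $t\mapsto H\big(t,y,\nabla\psi(y)\big)$ obtained above, $H\big(t,y,\nabla\psi(y)\big)>0$ for $t<\tau$ in case \hyperlink{itm:TC}{\rm \bf [TC]} (resp.\ for $t>\tau$ in \hyperlink{itm:TD}{\rm \bf [TD]}); this is incompatible with the supersolution inequality \eqref{eqn:psiSupersolution}, valid for $t\ge s(y)$ (resp.\ $t\le s(y)$), unless $s(y)=\tau$. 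Strict monotonicity then also makes $\tau=s(y)$ the unique zero of $t\mapsto H(t,y,\nabla\psi(y))$, in agreement with \eqref{eqn:psiSolution}.

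The step I expect to be the main obstacle is precisely the passage from weak to strict monotonicity of the Hamiltonian when $K$ is only assumed strictly monotone in $t$: at a frozen $(q,p)$ this is clean once the supremum in $H$ is attained, as above, but along a trajectory the spatial variation of $H$ competes with the (a priori possibly sublinear) time-increment of $K$, so strictness there requires a careful argument (and is transparent under mild extra regularity, such as $\partial_t K$ bounded away from zero). Secondary technical points to address are the non-smoothness of $J_\psi$, the piecewise continuity of $A$ at its finitely many switching times, the verification that $0<s(y)<\infty$ so that \eqref{eqn:psiSolution} is available, and — for the clause ``for each Hamiltonian trajectory'' — the case where the supremum defining $H$ is not realized by a single control, which one would handle via the Young-measure relaxation discussed after Theorem~\ref{thm:Pontryagin}.
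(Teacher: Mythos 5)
Your proof is correct and follows essentially the same route as the paper: compute $\frac{d}{dt}H(t,\gamma(t),p(t))$ along the Pontryagin flow, observe that the spatial contributions cancel via the costate equation leaving $-\partial_t K$, use the sign of $\partial_t K$ under \hyperlink{itm:TC}{\bf [TC]}/\hyperlink{itm:TD}{\bf [TD]} to get strict monotonicity, and conclude uniqueness of the zero of $H$; for part (2), combine $p(\tau)=\nabla\psi(y)$ and the transversality condition with the monotonicity of $t\mapsto H(t,y,\nabla\psi(y))$ and \eqref{eqn:psiSolution}. The concern you raise about spatial variation ``competing'' with the time-increment of $K$ is not actually an obstacle once the cancellation is carried out: the identity $\frac{d}{dt}H(t,\gamma(t),p(t)) = -\partial_t K(t,\gamma(t),A(t))$ is exact (the Hamiltonian system absorbs all the spatial and costate terms), so the only input needed is that $\partial_t K$ has a definite sign, which is how the paper reads \hyperlink{itm:TC}{\bf [TC]}/\hyperlink{itm:TD}{\bf [TD]}; no extra argument is required.
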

	\begin{proof}
		A consequence of the Pontryagin maximum principle is that for almost every $t<\tau$,
		\begin{align}
			&\ \frac{d}{dt} H\big(t, \gamma(t),p(t)\big) \nn\\
			=&\ -\frac{\partial}{\partial t} K\big(t,\gamma(t),p(t)\big)\nn \\
			&\ +\big[p^\top(t) \nabla f\big(\gamma(t),A(t)\big)-\nabla k\big(t,\gamma(t),A(t)\big)\big]\dot{\gamma}(t)+\dot{p}(t) f\big(\gamma(t),A(t)\big),\nn
		\end{align}
		and the last line is zero from the costate equation \eqref{eqn:costate}. Given \hyperlink{itm:TC}{\rm \bf [TC]} , $-\frac{\partial}{\partial t} K<0$ and $H$ decreases along the trajectory, and given \hyperlink{itm:TD}{\rm \bf [TD]}  the opposite holds. Clearly, this implies that $H(t,\gamma(t),p(t))=0$ can occur for at most one time.

		In the second case, $p(\tau)=\nabla \psi(\gamma(\tau))$ from the Pontryagin maximum principle as used in Proposition \ref{prop:dualStructure}, and if $\gamma(\tau)=y$, the equation $H(\tau,y,\nabla \psi(y))=0$ uniquely determines $\tau=s(y)$.
	\end{proof}

	\begin{remark}
		The Pontryagin maximum principle can also be stated for suitable sub/super derivatives of $\psi$ and $J_\psi$.  The lack of differentiablity of $\psi$ at $y$ corresponds to the existence of multiple optimal trajectories terminating at $y$ with possibly different end times satisfying $H(\tau,y,\beta)=0$ for $\beta\in \partial \psi(y)$. Similarly, lack of differentiability of $J_\psi$ at $(t,q)$ corresponds to multiple optimal trajectories emitting from $\gamma(t)=q$ with different $p(t)$ values.
	\end{remark}

	\begin{remark}
		The time monotonicity of $J_\psi$ of Proposition {\normalfont\ref{prop:dualStructure}} and characterization of stopping times of Lemma {\normalfont\ref{lem:hitting_time}} is closely related to the geometric pathwise monotonicity of {\normalfont\cite{beiglboeck2017optimal}}. Beiglb\"{o}ck et al.\ consider \emph{stop-go} pairs of trajectories $(\gamma_1(\cdot),\gamma_2(\cdot))$ where $\gamma_1(t_1)=\gamma_2(t_2)=y$ for $t_1<t_2$.  In the case of \hyperlink{itm:TC}{\rm \bf [TD]}, if $\gamma_1(\cdot)$ stops at $t_1$, then $\gamma_2(\cdot)$ must also stop at $t_2$. Another poof of this principle uses the dynamic programming principle.  Consider that $\gamma_2(\cdot)$ is optimal and continues until time $\tau_2>t_2$, then
		\begin{align}
			J_\psi(t_2,y) =&\ \psi(y)-\int_{t_2}^{\tau_2}L\big(t,\gamma_2(t)\big)dt\nn\\
			<&\ \psi(y)-\int_{t_1}^{\tau_2-t_2+t_1}L\big(t,\gamma_2(t+t_2-t_1)\big)dt\nn\\
			\leq&\ J_\psi(t_1,y).\nn
		\end{align}
		This shows that if $\gamma_2(\cdot)$ continues at time $t_2$, then $\gamma_1(\cdot)$ must also continue at time $t_1$ because $J_\psi(t_1,y)>\psi(y)$, or equivalently if $\gamma_1(\cdot)$ stops at time $t_1$, then $\gamma_2(\cdot)$ must also stop at time $t_2$.
	\end{remark}

	We will now make some assumptions directly on the Hamiltonian that are sufficient for our analysis. %Add  references for further enquiry? AP
	\begin{enumerate}[label=\textbf{H\arabic*}] \setcounter{enumi}{4}
		\item \label{itm:HCoercive} We also assume that the Hamiltonian is smooth and satisfies $H(t,q,p)\sim |p|^\beta$ uniformly in $t$ and $q$ for $1<\beta<\infty$.

		\item \label{itm:Aconvex} We assume that $p\cdot f(q,A)-K(t,q,A)$ admits a unique maximizer $A^*$ for all $t$, $q$, and $p$, and $(t,q,p)\mapsto A^*$ is continuous.
	\end{enumerate}

	We recall the notion of a Hamiltonian flow $\chi_H:\R^+\times\R^+\times \R^n\times\R^n\rightarrow \R^n\times \R^n$ as $\chi_H(t_1,t_2,q,\beta) = (\gamma(t_2),p(t_2))$ where $\gamma$ and $p$ solve
	\begin{align}
		\dot{\gamma}(t) =&\ D_pH\big(t,\gamma(t),p(t)\big)\nn\\
		\dot{p}(t)=&\ -D_qH\big(t,\gamma(t),p(t)\big),\nn
	\end{align}
	with $\gamma(t_1)=q$ and $p(t_1)=\beta$.  In general this Hamiltonian flow may be set-valued, but under hypothesis \ref{itm:HCoercive} the solutions are unique.
	We let 
	$$
		\chi_{_{H,J_\psi}} (t, x)= \pi^*\chi_H\big(0,t, x, \nabla J_\psi (0, x)\big)
	$$ 
	be the spatial flow with the initial momentum determined by $\nabla J_\psi(0,x)$. Here $\pi^*:\R^n\times \R^n \rightarrow \R^n$ is the canonical projection.  We let $\tau(x)$ denote the unique stopping time determined by the trajectory $(\gamma,p)$ cf.\ Lemma \ref{lem:hitting_time}.

	We also consider the backward flow with terminal end time given by $s(y)$ where for $t<s(y)$ we let
	$$
	\chi_{_{H,\psi,s}} (t, y) = \pi^*\chi_H\big(s(y),t, y, \nabla \psi (y)\big)
	$$
	be the backwards flow with terminal condition $\gamma(s(y))=y$ and $p(s(y))=\nabla \psi(y)$. 
	 
	We note the relationship of the end time $\tau(x)$, determined from the initial condition, and the end time $s(y)$ given by the free boundary to \eqref{eqn:HJB}, which can be expressed in the following two ways by inverting the Hamiltonian flow:
	\begin{align}
		s\big(\chi_{_{H,J_\psi}}(\tau(x),x)\big)=&\ \tau(x),\nn\\
		s(y)=&\ \tau\big(\chi_{_{H,\psi,s}} (0, y)\big).\nn
	\end{align}

	\begin{theorem}\label{thm:Monge_map}
		Suppose hypotheses {\normalfont\ref{itm:continuity}} - {\normalfont\ref{itm:tCoercive}} hold along with \ref{itm:HCoercive} and either \hyperlink{itm:TC}{\rm \bf [TC]} or \hyperlink{itm:TD}{\rm \bf [TD]}. Under these assumptions the Hamiltonian flow $\chi_H(t_1,t_2,q,\beta)$ is everywhere single-valued and $J_\psi$ is Lipschitz continuous.
		\begin{enumerate}
			
			\item If $\mu$ is absolutely continuous with respect to Lebesgue measure and has disjoint support from $\nu$, then the optimal transport plan is unique and is given by $x \mapsto \chi_{_{H,J_\psi}} (\tau (x), x)$, that is 
			\begin{equation*}
				V(\mu,\nu)=\int _{\R^n} c \big(x, \chi_{_{H,J_\psi}} (\tau (x), x)\big) \mu(dx), 
			\end{equation*}
			where the end time  $\tau(x)$ is almost everywhere the unique time such that $H(\tau,\gamma(\tau),p(\tau))=0$, where $\gamma (0)=x$ and $p(0)=\nabla J_\psi(0, x)$.

			If {\normalfont\ref{itm:Aconvex}}, then the pair $(\rho,\tilde{\rho})$ corresponding to the embedding of the optimal transport plan into the Eulerian formulation, cf.\ Proposition {\normalfont\ref{prop:Eulerian_embedding}}, is a minimizer of the Eulerian formulation.

			\item If $\nu$ is absolutely continuous with respect to Lebesgue measure and has disjoint support from $\mu$, then the optimal transport plan is unique and given by $y\mapsto \chi_{_{H,\psi,s}}(0,y)$. Similarly, in the case of {\normalfont\ref{itm:Aconvex}}, the embedding from Proposition {\normalfont\ref{prop:Eulerian_embedding}} minimizes the Eulerian formulation. In particular, $\tilde{\rho}(dt,dy) = \delta_{s(y)}(dt)\nu(dy)$, where $s$ is the free boundary of \eqref{eqn:HJB}.
		\end{enumerate}

		%In either of these cases, $\rho$ of the Eulerian formulation is also the density of the flow determined by these maps $\rho(t,\cdot,\cdot)\leq {\chi_H}_\#\mu$, and $\tilde{\rho}$ is the stopping distribution.
		% Then $\tilde{\rho}$ has support on $\partial \{(t,\mb{x})\ t> (<) s(\mb{x})\}$.  If $\nu$ is absolutely continuous it suffices to only consider the graph of $s(\mb{x})$. In particular if $\mb{x}$ is a trajectory that ends at $\mb{z}$, then at points of super-differentiability of $\psi$ then end-time must be $s(\mb{z})$.  Similarly, if $\mb{x}$ is a trajectory with costate $\mb{p}$, then there is a unique optimal end-time $\tau$.
	\end{theorem}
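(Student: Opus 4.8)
The plan is to follow the classical recipe for extracting a Monge map from Kantorovich duality (à la Brenier and Gangbo--McCann): show that the optimal plan $\pi^*$ is concentrated on the graph of a map by exploiting a ``twist'' (injectivity) property, which here is not a structural assumption on $c$ but is produced jointly by two facts special to our setting --- the single-valuedness of the Hamiltonian flow under \ref{itm:HCoercive}, and the \emph{unique} determination of the end time by the transversality condition \eqref{eqn:transversality} along an optimal trajectory under \hyperlink{itm:TC}{\rm\bf [TC]}\,/\,\hyperlink{itm:TD}{\rm\bf [TD]} (Lemma \ref{lem:hitting_time}). For the two blanket claims: single-valuedness of $\chi_H$ follows because under \ref{itm:HCoercive} the Hamiltonian vector field $(D_pH,-D_qH)$ is smooth, hence locally Lipschitz, so the Hamiltonian system has unique solutions, and the relevant trajectories remain in a fixed compact set since $H(t,\gamma(t),p(t))$ is monotone along them (Lemma \ref{lem:hitting_time}) and trapped between $0$ and its initial value, so the growth $H\sim|p|^\beta$ keeps $p$ bounded, $\dot\gamma=D_pH$ bounded, and the horizon is finite by \ref{itm:tCoercive}; Lipschitz continuity of $J_\psi$ is the standard regularity of a value function (viscosity solution of \eqref{eqn:HJB}) for a smooth coercive Hamiltonian, and can alternatively be obtained from $J_\psi(0,\cdot)$ being a $c$-transform of $\psi$ with $c$ Lipschitz (Lemma \ref{lem:lsc}) together with monotonicity in $t$ (Proposition \ref{prop:dualStructure}).

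\textbf{Part (1).} Take the optimal $\pi^*$, $\psi^*$, $\phi^*=J_\psi(0,\cdot)$ of Propositions \ref{prop:MKattainment} and \ref{prop:HJB}, so that $\psi^*(y)-\phi^*(x)=c(x,y)$ for $\pi^*$-a.e.\ $(x,y)$. Since $\mu\ll\mathrm{Leb}$ and $J_\psi(0,\cdot)$ is Lipschitz, Rademacher's theorem gives differentiability of $J_\psi(0,\cdot)$ at $\mu$-a.e.\ $x$, hence at $x$ for $\pi^*$-a.e.\ $(x,y)$. For such a pair, $\psi^*(y)-\phi^*(x)=c(x,y)$ together with attainment of the infimum defining $c(x,y)$ (guaranteed under \ref{itm:continuity}--\ref{itm:tCoercive} and \ref{itm:HCoercive}) shows that the supremum defining $J_\psi(0,x)$ is realized by an optimal trajectory from $x$ to $y$; applying Theorem \ref{thm:Pontryagin}, this trajectory is the projection of a Hamiltonian trajectory $(\gamma,p)$ with $p(0)=\nabla J_\psi(0,x)=\nabla\phi^*(x)$, and --- using disjointness of $\mathrm{supp}\,\mu$ and $\mathrm{supp}\,\nu$ to ensure $x\neq y$, so that Lemma \ref{lem:hitting_time}(1) applies --- the end time $\tau(x)$ is the unique solution of $H(\tau,\gamma(\tau),p(\tau))=0$. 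Since $\chi_H$ is single-valued, $y=\chi_{_{H,J_\psi}}(\tau(x),x)$ is then a single-valued function of $x$, so every optimal plan is concentrated on the graph of $x\mapsto\chi_{_{H,J_\psi}}(\tau(x),x)$; this yields simultaneously the Monge representation of $V(\mu,\nu)$ and uniqueness. If in addition \ref{itm:Aconvex} holds, the optimal velocity $D_pH(t,\gamma(t),p(t))=f\big(\gamma(t),A^*(t,\gamma(t),p(t))\big)$ is realized by the genuine time-continuous control $A^*$, so the embedding of Proposition \ref{prop:Eulerian_embedding} applied to this optimal map needs no $\epsilon$-approximation and produces an admissible $(\rho,\tilde\rho)$ with cost exactly $\int c\,d\pi^*=V(\mu,\nu)=W(\mu,\nu)$, i.e.\ a minimizer of the Eulerian problem.

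\textbf{Part (2).} This is the mirror argument, run from the target. Now $\nu\ll\mathrm{Leb}$ and $\psi^*$ is Lipschitz (Proposition \ref{prop:MKattainment}), so $\psi^*$ is differentiable at $\nu$-a.e.\ $y$, hence at $y$ for $\pi^*$-a.e.\ $(x,y)$. For such $(x,y)$, $\psi^*(y)=\inf_{x'}\{c(x',y)+\phi^*(x')\}$ is attained at $x$; the optimal trajectory from $x$ to $y$ is a Hamiltonian trajectory with terminal costate $p(\tau)=\nabla\psi^*(y)$ (Theorem \ref{thm:Pontryagin}), and by Lemma \ref{lem:hitting_time}(2) (disjoint supports force $\tau>0$) its end time equals $s(y)$, the unique root of $H(t,y,\nabla\psi^*(y))=0$ and the free boundary of \eqref{eqn:HJB}. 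Running the single-valued Hamiltonian flow backward from the terminal data $\big(y,\nabla\psi^*(y)\big)$ at time $s(y)$ yields $x=\chi_{_{H,\psi,s}}(0,y)$ as a single-valued function of $y$, so $\pi^*$ is concentrated on this graph, giving uniqueness and the stated representation. Under \ref{itm:Aconvex} the same embedding minimizes the Eulerian problem by the argument of Part (1); and since Theorem \ref{thm:Eulerian_verification}(1) forces $\mathrm{supp}\,\tilde\rho\subset\{J_\psi=\psi\}$, while by construction each unit of mass stops precisely when it arrives at $y$ at the unique time $s(y)$, one reads off $\tilde\rho(dt,dy)=\delta_{s(y)}(dt)\,\nu(dy)$.

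\textbf{Main obstacle.} The crux is the differentiability bookkeeping that makes the twist work: at $\mu$-a.e.\ $x$ (resp.\ $\nu$-a.e.\ $y$) the potential must be differentiable \emph{and} its gradient must coincide with the initial (resp.\ terminal) costate of the --- a priori possibly non-unique --- optimal trajectory, so that the pair (costate at the given endpoint, transversality time) pins down the other endpoint uniquely. This leans on Theorem \ref{thm:Pontryagin}, which presupposes attainment of the control problem defining $c(x,y)$; the step deserving the most care is therefore verifying that the superlinearity/smoothness packaged in \ref{itm:HCoercive}, together with the coercivity \ref{itm:ACoercive}--\ref{itm:tCoercive}, actually guarantees this attainment (classically when \ref{itm:Aconvex} holds, otherwise through the Young-measure/Eulerian relaxation), and that the endpoint identification is insensitive to which optimal trajectory one picks when several exist. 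Everything else is the routine ``an optimal plan concentrates on a graph'' manipulation.
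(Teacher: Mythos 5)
Your proposal follows essentially the same route as the paper's proof: single-valuedness of $\chi_H$ from the smoothness in \ref{itm:HCoercive}, Lipschitz regularity of $J_\psi$, Rademacher-type a.e.\ differentiability of $J_\psi(0,\cdot)$ (resp.\ $\psi$) under absolute continuity of $\mu$ (resp.\ $\nu$), then Theorem~\ref{thm:Pontryagin} to pin the costate endpoints and Lemma~\ref{lem:hitting_time} (using the disjoint supports to ensure $x\neq y$) to pin the unique stopping time, yielding concentration of $\pi^*$ on a graph, with \ref{itm:Aconvex} supplying the exact Eulerian embedding. The only substantive difference is that you explicitly flag the hidden dependence on attainment of the optimizer in \eqref{eqn:MKCost} (a hypothesis of Theorem~\ref{thm:Pontryagin} that Theorem~\ref{thm:Monge_map} invokes without restating), which is a fair point about the paper's exposition but not a divergence in the argument.
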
  

	%REDO this, emphasize how uniqueness proves that this is the unique rho,\tilde{rho} AP.
	\begin{proof}
		We have verified that $\psi$ is Lipschitz continuous in Proposition \ref{prop:MKattainment}.  That $J_\psi$ is Lipschitz continuous follows from hypothesis \ref{itm:HCoercive} and the uniform estimates of \cite{Cannarsa2010Holder}. %A little sketchy because they have fixed end time AP.

		The Hamiltonian equations have a unique solution for the initial (or terminal) value problem using the smoothness assumption of \ref{itm:HCoercive}, thus the Hamiltonian flow $\chi_H$ is well defined for each $(t_1,t_2,q,\beta)$ and continuous.  By Lemma \ref{lem:hitting_time} and almost everywhere differentiability of $\psi$ and $J_\psi$, the stopping times $\tau(x)$ and $s(y)$ are well defined almost everywhere and measurable, so the maps $x \mapsto \chi_{_{H,J_\psi}} (\tau (x), x)$ and $y\mapsto \chi_{_{H,\psi,s}}(0,y)$ exist.

		We now invoke Theorem \ref{thm:Pontryagin} to show that, if $\pi$, $\psi$ and $J_\psi$ are optimal, then the optimal trajectories $\gamma^{x,y}$ solve the Hamiltonian equations with the costate satisfying $p^{x,y}(0)=\nabla J_\psi(0,x)$ and $p^{x,y}(\tau^{x,y})=\nabla \psi(y)$ at points of differentiablity of $J_\psi$ and $\psi$.  Hence, if $\mu$ is absolutely continuous then $\gamma^{x,y},p^{x,y}$ are determined by $\chi_H(0,\cdot,x,\nabla J_\psi(0,x))$ for almost every $x$ and if $\nu$ is absolutely continuous they are determined by $\chi_H(s(y),\cdot,y,\nabla \psi(y))$ for almost every $y$. It follows in either case that $\pi$ is the unique transport plan supported on the graph $(x,\chi_{_{H,J_{\psi}}}(\tau(x),x))$ or $(\chi_{_{H,\psi,s}}(0,y),y)$.

		We now suppose \ref{itm:Aconvex} so that for any Hamiltonian trajectory $(\gamma,p)$ there is a unique control $A(t)$.  By Proposition \ref{prop:Eulerian_embedding}, $\chi_{_{H,J_{\psi}}},\tau,A$ embed into a density process and stopping distribution with cost $V(\mu,\nu)$.  We have proven in Theorem \ref{thm:Eulerian_equality} that $V(\mu,\nu)=W(\mu,\mu)$ so $(\rho,\tilde{\rho})$ are optimal.  Furthermore, $\tilde{\rho}$ has support in the set $(s(y),y)$ and is given by $\delta_{s(y)}(dt)\nu(dy)$.
	\end{proof}

	%In \cite{bernard2005optimal} more tools are developed to resolve the stronger criteria that hold when the initial or target is absoutely continuous.

	\begin{remark}\label{rem:Eulerian_Uniqueness}
		We also conjecture that under the assumptions of Theorem{\normalfont~\ref{thm:Monge_map}} the minimizer of the Eulerian formulation is unique.  This holds if $J_\psi\in C^1(\R^+\times \R^n)$ by uniqueness of the transport equation implied by Theorem {\normalfont\ref{thm:Eulerian_verification}}.  We expect that an argument can be made to handle the case that $J_\psi$ is Lipschitz by using generalized gradients of $J_\psi$, but we leave this to another work.
	\end{remark}

\section{One dimensional examples}\label{sec:examples}
%\marginpar{DONE Feb. 23, 2018}

%	\subsection{Discrete controls with  $\mathbb{A}=\{\pm 1\}$, $f(q,\pm1)=\pm 1$, $K(t,q,\pm1) =g'(t)$.}
%	\marginpar{It will be much better to put an additional example that does not reduce to the usual optimal transport.} 

	To illustrate our analysis in a very simple setting, we now consider a one-dimensional problem where the control set $\mathbb{A}$ consists of two options: travelling to the left or to the right with constant speed. We let the cost be a function of time only, namely as the derivative of a function $g$ with $g(0)=0$ and $g'(t)\geq 0$.  The Hamiltonian is then,
	$$
		H(t,q,p) = \max\{\pm p-g'(t)\}=|p|-g'(t).
	$$
	The optimal control is to travel left if $p<0$ and to the right if $p>0$.  Since the Hamiltonian does not depend on $q$, the costate is constant along trajectories, which will therefore be straight lines with slope 1, illustrated in Figure~\ref{fig-2}.  It is then easy to see that the cost $c(x,y) = g(|y-x|)$, hence the corresponding Monge map $Y(x)$ referred to by Theorem~\ref{thm:Monge_map} can also be obtained from classical results; see e.g., \cite{G-M}. It is however easy to compute in this one dimensional case. 	

	\begin{figure}[!htb]
		\centering
		\begin{subfigure}[b]{0.5\textwidth}
		\centering
        	\includegraphics[width=\linewidth]{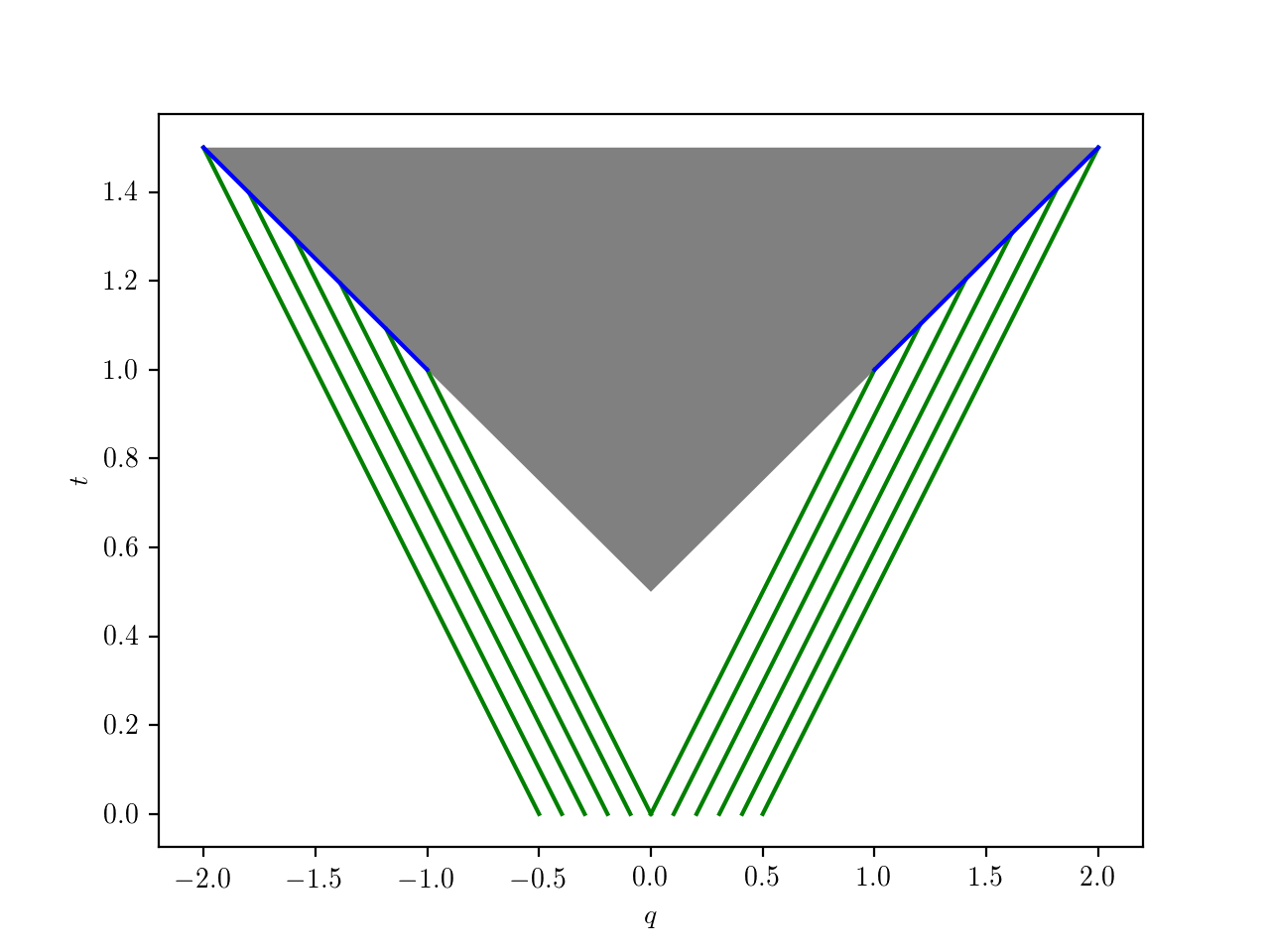}
        	\caption{$g(t)=t^2$}
    	\end{subfigure}%
    	\begin{subfigure}[b]{0.5\textwidth}
		\centering
        	\includegraphics[width=\linewidth]{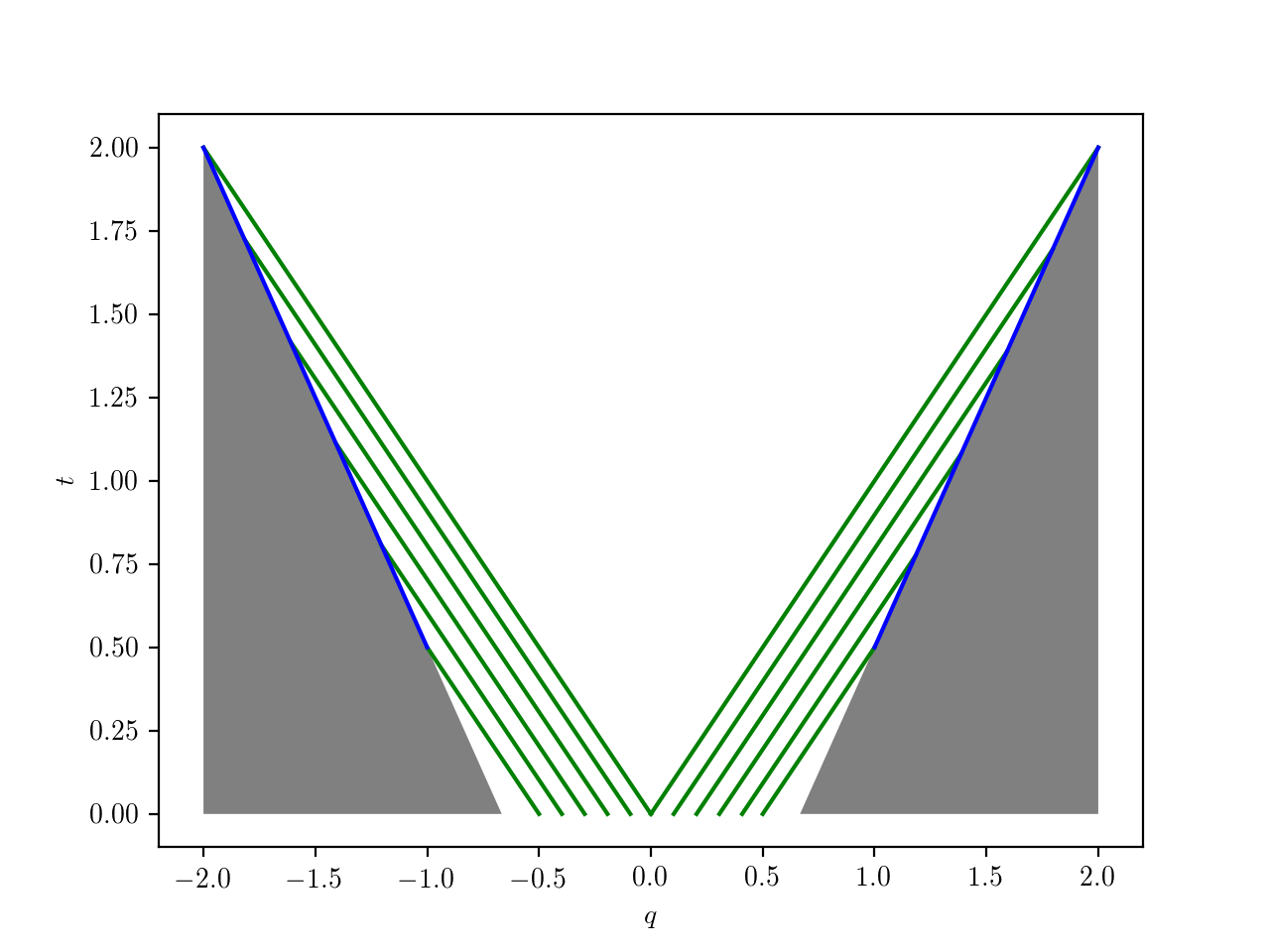}
        \caption{$g(t)=1-e^{-t}$}
    	\end{subfigure}
    	\caption{\label{fig-2}The optimal trajectories in green, which stop upon hitting the free boundary $s$ in blue.  The shaded region shows where $\psi(q)=J_\psi(t,q)$.}
	\end{figure}

	To illustrate how nonsmooth solutions can occur and to explicitly compute the free boundary $s(y)$, and the optimal dual functions $\psi$, $J_\psi$, we assume the initial distribution to have uniform density on a connected interval, 
	 $\frac{d\mu}{dx} = 1_{[-1/2,1/2]}$, 
	while  the target is $\frac{d\nu}{dy}= \frac{1}{2}1_{[-2,-1]}+\frac{1}{2}1_{[1,2]}$.
	
	\begin{enumerate}[label = \Alph*.]
		\item\label{itm:convex-1} If $g$ is convex, that is if the cost $g'$ is monotonically increasing, then the Monge map $Y(x)$ is monotone and can easily be seen to transport the interval $[-1/2,0]$ to $[-2,-1]$.  To satisfy the target constraint,  
		we require that $Y'(x) = \frac{d\mu}{dx}(x)/\frac{d\nu}{dy}(Y(x))=2$. Since $Y(-1/2)=-2$ and $Y(1/2)=2$, then
		$$
			Y(x) = \begin{cases} -1 + 2x, & x<0 \\ 1+2x, & x>0.\end{cases}
		$$
		The travel time is given by $\tau(x)=|Y(x)-x| = 1+|x|$. Inverting $Y$ to express this as the free boundary $s(y)$, we get $$s(y) = \frac{1}{2} + \frac{1}{2}|y|.$$  The costate as a function of the end position, $P(y)$ has the same sign as $y$ and is determined by the transversality 
	 condition \eqref{eqn:transversality}, 
		$$0=H(s(y),y,P(y)) = |P(y)|-g'(s(y)).$$ 
		Thus $P(y) = {\rm sign}(y) g'(\frac{1}{2} + \frac{1}{2}|y|)$. The dual potential $\psi$ satisfies $\nabla \psi(y)=P(y)$, thus can be integrated to obtain
		$$
			\psi(y) = \int_{\pm 1}^y g'\left(\frac{1}{2}\pm \frac{1}{2}q\right)dq +C%= 2\int_{1}^{\frac{1}{2}+ \frac{1}{2}|y|}g'(u)du 
			= 2g\left(\frac{1}{2}+ \frac{1}{2}|y|\right).
		$$
		The function $J_\psi$ is obtained via the formula,
		\begin{align}
			 J_\psi(0,x) &= \psi\big(Y(x)\big) -\int_0^{\tau(x)} g'(s) ds\nn\\
			 &= \psi\big(Y(x)\big) - g\big(|Y(x)-x|\big)\nn\\
			 &=g(1+|x|).\nn
		\end{align}
		At later times, with $t<s(q)$, we have
		$$
			J_\psi(t,q) = J_\psi(0,|q|-t) + \int_0^t g'(s) ds = g(1+|q|-t)+g(t).
		$$
		Note that the inequality $\psi(q)\leq J_\psi(t,q)$ is here an expression of the  convexity of $g$, since
		$
			g\left(\frac{1}{2}+ \frac{1}{2}|q|\right)\leq \frac{1}{2}g(1+|q|-t)+\frac{1}{2}g(t).
		$\\

		\item\label{itm:concave-1} If $g$ is concave, i.e., $g'$ is monotonically decreasing, then the Monge map $Y(x)$ is orientation reversing and satisfies $Y'(x)=-2$. Then $Y(-1/2)=-1$ and $Y(1/2)=1$ and
		$$
			Y(x) = \begin{cases} -2- 2x, & x<0 \\ 2-2x, & x>0.\end{cases}
		$$
		The travel time is $\tau(x) = 2-3|x|$. The free boundary is then $$s(y) = 2-3|1-|y|/2|=-1+3|y|/2.$$  The transversality condition \eqref{eqn:transversality} implies that $\nabla \psi(y) = {\rm sign}(y) g'(-1+3|y|/2)$, which can be integrated to obtain
		$$
			\psi(y) = \int_{\pm 1}^y g'\left(-1\pm \frac{3}{2}q\right)dq +C%= \frac{2}{3}\int_{\frac{1}{2}}^{-1+ \frac{3}{2}|y|}g'(u)du 
			= \frac{2}{3}g\left(-1+ \frac{3}{2}|y|\right).
		$$
		The function $J_\psi$ is then
		$$
		J_\psi(0,x) = \frac{2}{3}g\big(2-3|x|\big)-g\big(2-3|x|\big)=-\frac{1}{3}g(2-3|x|).
						%-J_\psi(0,x) = g\big(2-3|x|\big)-\frac{2}{3}g\big(2-3|x|\big)=\frac{1}{3}g(2-3|x|).
		$$
		At later times, with $t>s(q)$, we have
		$$
			J_\psi(t,q) = J_\psi(0,|q|-t) + g(t) = -\frac{1}{3}g(2-3(|q|-t))+g(t).
		$$
	\end{enumerate}

	\begin{figure}[!htb]
		\centering
		\begin{subfigure}[b]{0.5\textwidth}
		\centering
        	\includegraphics[width=\linewidth]{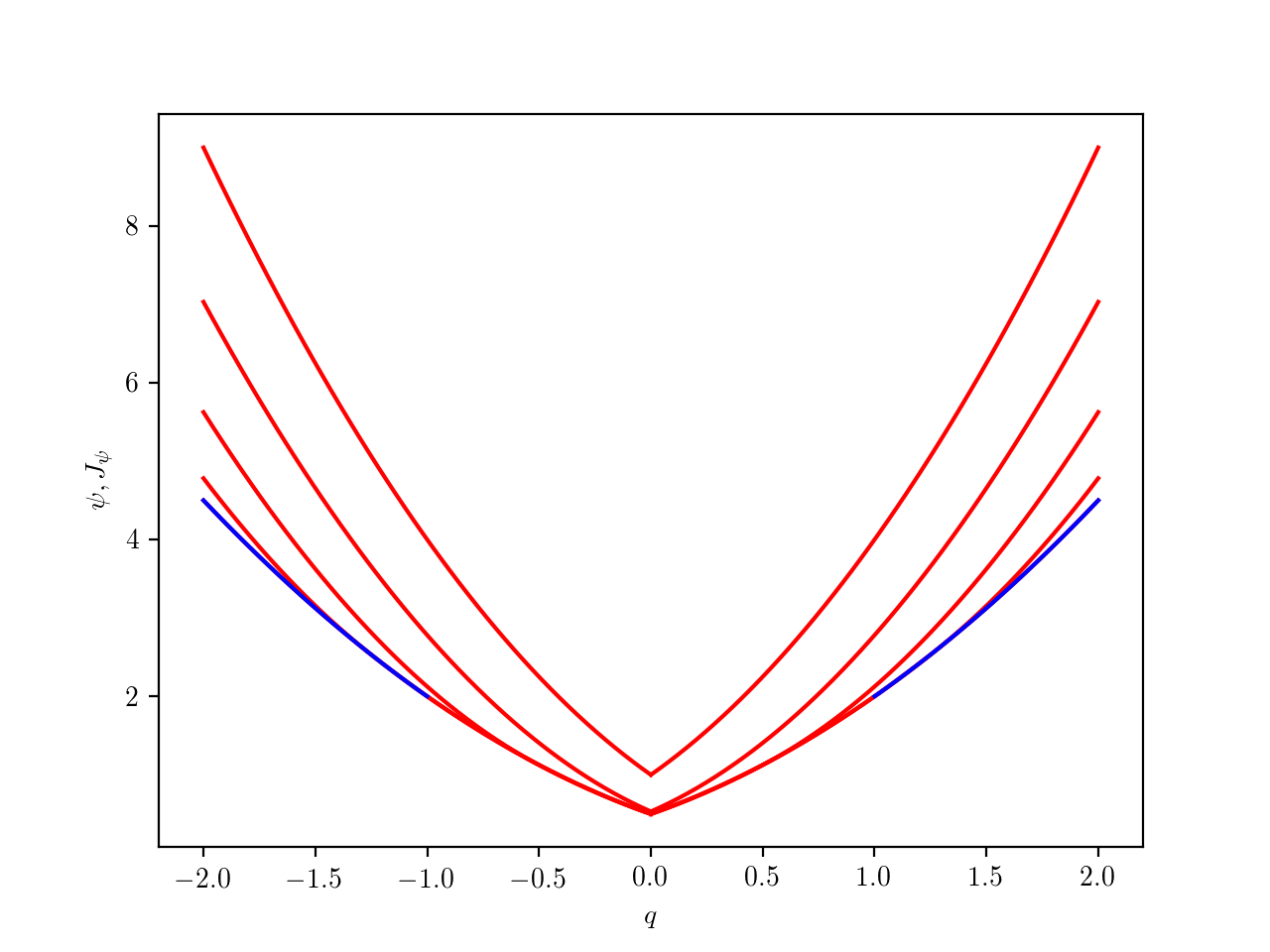}
        	\caption{$g(t)=t^2$}
    	\end{subfigure}%
    	\begin{subfigure}[b]{0.5\textwidth}
		\centering
        	\includegraphics[width=\linewidth]{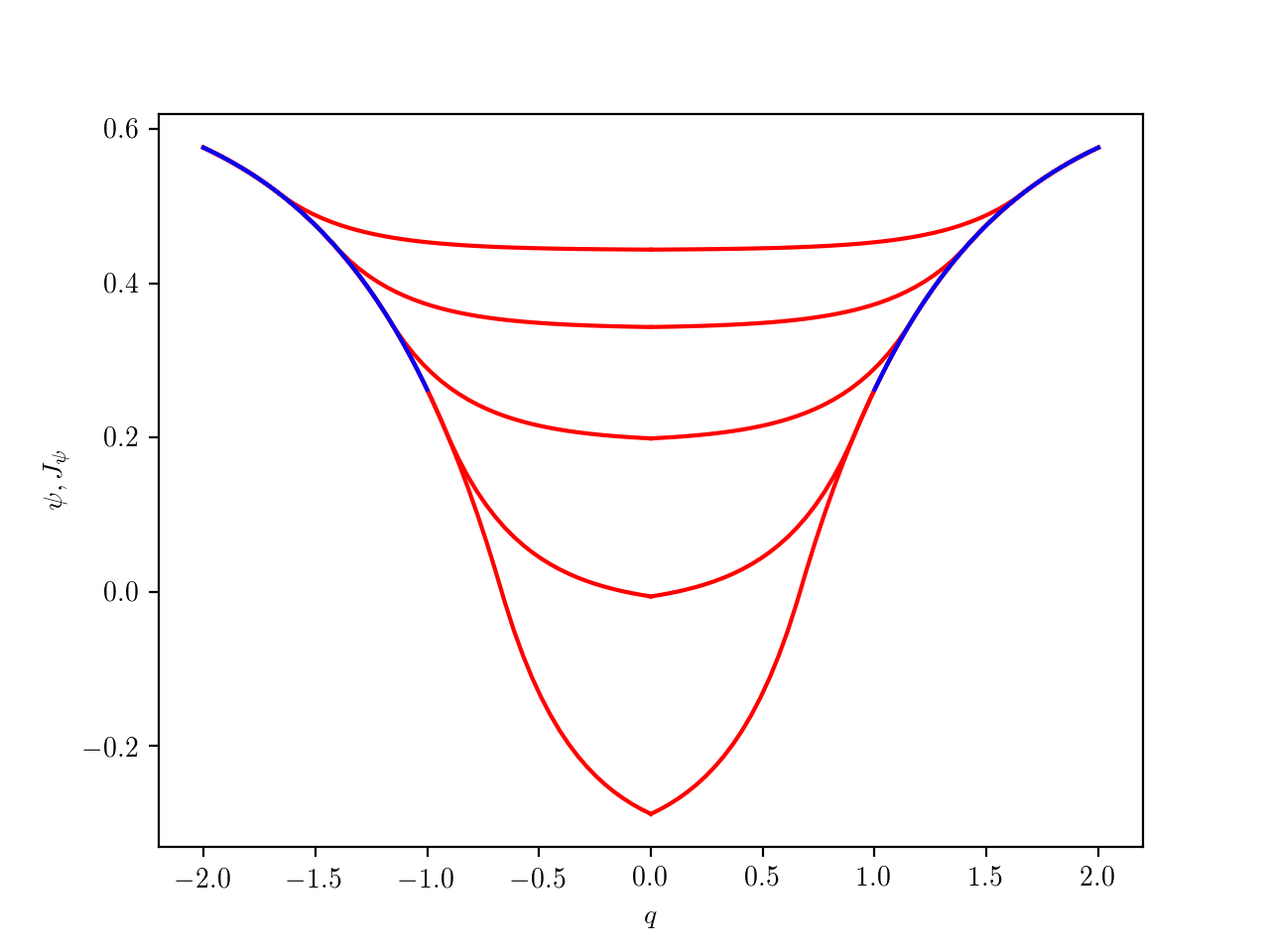}
        \caption{$g(t)=1-e^{-t}$}
    	\end{subfigure}
    	\caption{\label{fig-1}Time slices of $J_\psi$ in red decreasing / increasing in time, and $\psi$ in blue.}
	\end{figure}

	Figure~\ref{fig-1} illustrates how the function $J_\psi$ evolves in time $t$. In the case \ref{itm:convex-1}, the graph $J_\psi$ moves downward as time $t$ increases and mass is dropped when it hits the barrier $\psi$ (hitting time). 
	In the case \ref{itm:concave-1}, the graph $J_\psi$ moves upward as $t$ increases, and mass is dropped when it departs from the barrier $\psi$ (exit time).

\section{Connections with classical optimal transport problems}\label{sec:Connection-OT} %\marginpar{Done Feb. 23. YH.}
	
	We now generalize the above examples to higher dimensions and show a %new 
	  simple 
	connection with the classical optimal transportation problem of Monge (see e.g. \cite{G-M}). %\cite{caffarelli2002constructing}. 
	%{\red In the classical setting, where the cost can be reduced to a convex/concage function of distance (see e.g. \cite{G-M}), our formulation of optimal transport with free end-time, has a simple connection to those with fixed end-times:  }
	We let the control set be the unit sphere, $\mathbb{A}=S^{n-1}\subset \R^n$ and $f(q,A)=A$, we can then view the control problem as minimizing over trajectories with velocity bounded by $1$. Consider now a cost that is only given by a time penalty,
		$$
			K(t,q,A)=g'(t), 
		$$
		where $g$ satisfies $g(0)=0$ and $g'(t)\geq 0$. The optimal trajectory connecting $x$ and $y$ is again the straight line that reaches $y$ at time $\tau^{x, y}=|y-x|$.  Thus, the cost associated to travelling from $x$ to $y$ in the optimal amount of time is
		$$
			c(x,y)=\int_0^{\tau^{x,y}}g'(t)dt = g(|y-x|).
		$$
			In the case that $g$ is strictly convex, $g'(t)$ increases hence satisfies  \hyperlink{itm:TC}{\bf [TC]} 
		and if $g$ is strictly concave then   \hyperlink{itm:TD}
			{\bf [TD]}  
		 is satisfied. The case when $g(t)=t$ is the stationary case  \hyperlink{itm:TS}
			{\bf [TS]}  and corresponds to the classical distance function cost of Monge (see e.g., \cite{caffarelli2002constructing}). %Our results
		Proposition~\ref{prop:dualStructure} then identifies the qualitative change in the solution when the cost changes from being induced by a convex or a concave function $g$. The end time then changes from being a hitting time of a barrier from below to a hitting time from above.  
	
	When $g$ is convex, and as noted in \cite{B-B}, an equivalent dynamic formulation can be posed with fixed endtime $T=1$ and Lagrangian $L(q,v)=g(|v|)$.  The associated Hamilton-Jacobi equation is then
	\begin{align}
		\frac{\partial}{\partial t}I_\psi(t,q)+g^*\big(|\nabla I_\psi(t,q)|\big) =&\ 0,\ 0\leq t<1,\nn\\
		I_\psi(1,y)=&\ \psi(y).\nn
	\end{align}
On the other hand, in our formulation we have the following free boundary problem, % {\red (due to the free end time $s(q)$),}
	\begin{align}
		\frac{\partial}{\partial t}J_\psi(t,q)+|\nabla J_\psi(t,q)|=&\ g'(t),\ 0\leq t<s(q),\nn\\
		J_\psi(t,q)=&\ \psi(q),\ s(q)\leq t,\nn\\
		|\nabla \psi(y)|=&\ g'\big(s(y)\big).\nn
	\end{align}
	The equivalence of these two problems implies that for optimal $\psi$, the solutions satisfy $I_\psi(0,x)=J_\psi(0,x)$.
	Both sets of equations have the characteristic curves given by straight lines, along which $\nabla J_\psi$ or $\nabla I_\psi$ is constant.  The difference is a dynamic rescaling of the length of the curves so that the trajectories in our free end time formulation have constant velocity, while in the classical  formulation they have constant end time.

\bibliography{DeterministicStoppingBib}
\bibliographystyle{plain}

\end{document}